\newtheorem{theorem}{Theorem}
\newtheorem{conjecture}[theorem]{Conjecture}
\newtheorem{corollary}{Corollary}[theorem]
\newtheorem{lemma}{Lemma}[theorem]
\newtheorem{proposition}{Proposition}[theorem]
\newcommand{\Rmnum}[1]{\expandafter\@slowromancap\romannumeral #1@}
 \newcommand{\Jac}{\operatorname{Jac}}
 \newcommand{\Aut}{\operatorname{Aut}}
\newcommand{\Sel}{\operatorname{Sel}}
\newcommand{\Gal}{\operatorname{Gal}}
\newcommand{\rank}{\operatorname{rank}}
\newcommand{\dimension}{\operatorname{dim}}
\newcommand{\disc}{\operatorname{disc}}
\theoremstyle{remark}
\newtheorem{definition}{Definition}[theorem]
\newtheorem{remark}{Remark}[theorem]
\begin{document}
\title{Points on Elliptic Curves Parametrizing Dynamical Galois Groups }
\author{Wade Hindes\\
Department of Mathematics, Brown University\\
Providence, RI 02912\\
E-mail: whindes@math.brown.edu}
\date{\today}
\maketitle 
\renewcommand{\thefootnote}{}

\footnote{2010 \emph{Mathematics Subject Classification}: Primary 14G05; Secondary 37P55Ê.}

\footnote{\emph{Key words and phrases}: Rational Points on Curves, Arithmetic Dynamics, Galois Theory.}

\renewcommand{\thefootnote}{\arabic{footnote}}
\setcounter{footnote}{0}

\begin{abstract} We show how rational points on certain varieties parametrize phenomena arising in the  Galois theory of iterates of quadratic polynomials. As an example, we characterize completely the set of  quadratic polynomials $x^2+c$ whose third iterate has a ``small" Galois group by determining the rational points on some elliptic curves. It follows as a corollary that the only such integer value with this property is $c=3$, answering a question of Rafe Jones. Furthermore, using a result of Granville's on the rational points on quadratic twists of a hyperelliptic curve, we indicate how the ABC conjecture implies a finite index result, suggesting a geometric interpretation of this problem.       
\end{abstract} 
\begin{section}{Introduction: A Geometric Interpretation of Iterated Galois Behavior}

The study of the Galois behavior of iterates of rational polynomials, begun by Odoni \cite{Odoni}, provides a wealth of interesting unsolved problems linking arithmetic, geometry, and dynamics. Even in the most studied and basic case, the quadratic polynomial, much remains a mystery. We begin by describing the generic situation. Suppose that $f\in\mathbb{Q}[x]$ is a polynomial of degree $d$ whose iterates are separable (the polynomials obtained from successive composition of $f$ have distinct roots in an algebraic closure). If $T_n$ denotes the set of roots of $f, f^2,\dots ,f^n$ together with $0$, then $T_n$ carries a natural $d$-ary rooted tree structure: $\alpha,\beta\in T_n$ share an edge if and only if $f(\alpha) =\beta$. As $f$ is a polynomial with rational coefficients, the Galois group of $f^n$, which we denote by $\Gal(f^n)$, acts via graph automorphisms on $T_n$. Such a framework provides an arboreal representation, $\Gal(f^n)\hookrightarrow \Aut(T_n)$, and we can ask about the size of the image. Suppose that $f$ is a critically infinite quadratic polynomial, that is, the orbit of the unique critical point of $f$ is infinite. It has been conjectured that for such polynomials, the image of the inverse limit, $G(f):=  \varprojlim\Gal(f^n)$, is of finite index in the automorphism group of the full preimage tree, $\Aut(T)$. This is an analog of Serre's result for the Galois action on the prime-powered torsion points of a non CM elliptic curve; see \cite{B-J} for a more complete description.

For integer values $c$, Stoll has given congruence relations which ensure that the Galois groups of iterates of $f_c(x)=x^2+c$ are as large as possible \cite{Stoll-Galois}. However, much is unknown as to the behavior of integer values not meeting these criteria, not to mention the more general setting of rational $c$ (for instance $c=3$ and $-\frac{2}{3}$).  In fact, the state of the art seems to be to analyze the prime divisibility of the critical orbit, $\{f(0),f^2(0), f^3(0)\dots \}$, from which one may be lucky enough to force the Galois groups to be as large a possible \cite{B-J}. 
\begin{remark}	Assuming the ABC conjecture, Gratton, Nguyen, and Tucker have deduced the existence of primitive prime divisors in the critical sequence of rational quadratic maps \cite{PrimitiveDivisors}. 
\end{remark} 
Note that Hilbert's irreducibility theorem implies that $\Gal(f_c^n)\cong\Aut(T_n)$ outside of a thin set of $c$'s, and it is precisely this thin set that we wish to characterize. To do this, it suffices to understand those polynomials $f$ for which $\Gal(f^n)$ is smaller than expected for the first time at level $n$, leading to the following definition: 
\begin{definition} Suppose $n\geq2$. If $f$ is a quadratic polynomial such that $\Gal(f^{n-1})\cong\Aut(T_{n-1})$, yet $\Gal(f^n)\ncong\Aut(T_n)$, then we say that $f$ has a \textit{small} $n$-th iterate. 
\end{definition} 
  
Let \[f_{\gamma,c}(x):=(x-\gamma)^2+c\] be a monic quadratic polynomial over the rational numbers (this seems to be the appropriate parametrized family of quadratic polynomials for studying the Galois theory of iterates, since the critical orbit encodes much of the relevant information: irreducibility, discriminant, etc. \cite{Jones1}). In this paper, we will usually view the critical point as given and study the Galois behavior of $f_{\gamma,c}$ as $c$ varries. With this is mind, we also have the following notation. 
 \begin{definition} For $\gamma\in\mathbb{Q}$, let \[S_{\gamma}^{(n)}:=\{c\in\mathbb{Q}|\,f_{\gamma,c}\,\text{has a small $n$-th iterate}\},\] be the set of rational $c$'s giving rise to a polynomial with small $n$-th iterate.  
\end{definition} 
In section $4$, we will show that $S_\gamma^{(3)}$ is infinite for all but finitely many $\gamma$, utilizing the theory of elliptic surfaces. However, we will first compute two complete examples, $S_0^{(3)}$ and $S_1^{(3)}$, in order to illustrate sensitivity on the critical point $\gamma$. 

To begin our study, fix a critical point $\gamma\in\mathbb{Q}$ and consider $f_{\gamma,t}=f_t$ with $t$ an indeterminate. Define a set of polynomials $\mathcal{G}_m$ in $\mathbb{Q}[t]$ recursively by \[\mathcal{G}_1=\{-t\}\:\; \text{and}\:\;\mathcal{G}_m=\mathcal{G}_{m-1}\cup\{f_t^m(\gamma)g: g\in\mathcal{G}_{m-1}\}\cup\{f_t^m(\gamma)\}\;\text{for}\;m\geq2.\]   

Suppose $c\in\mathbb{Q}$ is such that $\Gal(f_{c}^m)\cong\Aut(T_m)$, the generic situation. Then the collection of fields $\mathbb{Q}(\sqrt{g(c)})$ for $g\in\mathcal{G}_m$ comprise all quadratic subfields of $K_{m,c}$, a splitting field for $f_c^m$.  

\begin{remark}To see that this list of quadratic subfields is exhaustive in the generic setting, note that there are at most $2^m-1$ such subfields, coming from our embedding of the Galois group of $K_{m,c}$ into the $m$-fold wreath product of the cyclic group of order $2$. On the other hand, one can show inductively that $K_{m,c}$ is as large as possible if and only if the elements of the critical orbit, and their products with each other, form distinct quadratic extensions of $\mathbb{Q}$; see Theorem $1$ in \cite{Stoll-Galois} or Lemma $2.2$ in \cite{Jones2}.  
\end{remark}

We will now define the curves which parametrize certain Galois phenomena arising in the dynamics of quadratic polynomials. For a given $\gamma$, set \[C_{\gamma,n}=C_n:=\{(t,y): y^2=f_t^n(\gamma)=( ( (t-\gamma)^2+t)-\gamma)^2+t)\dots -\gamma)^2+t)\},\] 
an affine curve. 
We also define 
\[V_{n,\gamma}=V_n:=C_n\cup\bigcup_{g\in\mathcal{G}_{n-1}} \{(t,y): g(t)y^2=f_t^n(\gamma)\}.\]

If all iterates of $f_c$ are irreducible and $c\in S_\gamma^{(n)}$, then lemma 3.2 of \cite{Jones2} implies:  \[|\Gal(K_{n,c}/K_{n-1,c})|\neq2^{2^{n-1}}\text{and}\quad(c,y)\in V_{n,\gamma}(\mathbb{Q}),\]
which is to say, $\Gal(f_c^n)$ is not maximal for the first time at the $n$-th stage if and only if $(c,y)$ gives a rational point on some curve(s). Note that $g(t)y^2=f_t^n(\gamma)$ maps into the potentially singular hyperelliptic curve $y^2=g(t)\cdot f_t^n(\gamma)$. As we will see later, one can implement various algorithms with these hyperelliptic curves, which facilitate computations.

Now, if $\pi: V_n\rightarrow\mathbb{Q}$ is the map that sends $(t,y)\rightarrow t$, then we make the following conjecture.

\begin{conjecture} For all $\gamma\in\mathbb{Q}$, if $c\in\mathbb{Q}$ is such that $f_c$ is critically infinite and every iterate of $f_c$ is irreducible, then \[\{n|c\in\pi(V_n(\mathbb{Q}))\}\] is finite. In particular, if $\gamma=0$ and $c$ is an integer not equal to $-2$ with $-c$ not a square, then the above set is finite.    
\end{conjecture}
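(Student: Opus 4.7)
The plan is to translate the condition $c \in \pi(V_n(\mathbb{Q}))$ into a multiplicative square relation among values in the critical orbit of $f_c$, reduce the conjecture to a Boston--Jones-type finite-index statement, and then attack that via primitive prime divisors under the ABC conjecture. Unwinding the recursive definition of $\mathcal{G}_{n-1}$ and evaluating at $t=c$, the values $g(c)$ range (as $g$ varies) over all nontrivial classes of the subgroup $H_n \subseteq \mathbb{Q}^{\ast}/\mathbb{Q}^{\ast 2}$ generated by $[-c]$, $[f_c^2(\gamma)]$, $\ldots$, $[f_c^{n-1}(\gamma)]$; together with the trivial class coming from the $C_n$ component, one sees that $(c,y) \in V_n(\mathbb{Q})$ for some $y \in \mathbb{Q}$ if and only if $[f_c^n(\gamma)] \in H_n$. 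The conjecture is thus the assertion that, under the standing hypotheses, $[f_c^n(\gamma)] \notin H_n$ for all sufficiently large $n$.

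A short exact sequence calculation with $1 \to \Aut(T_n/T_{n-1}) \to \Aut(T_n) \to \Aut(T_{n-1}) \to 1$ shows that whenever $[f_c^n(\gamma)] \in H_n$ one has $|\Gal(K_{n,c}/K_{n-1,c})| < 2^{2^{n-1}}$, so that the index $[\Aut(T_n):\Gal(f_c^n)]$ at least doubles at level $n$. Hence, were the set in the conjecture infinite, $G(f_c)$ would have infinite index in $\Aut(T)$, contradicting the finite-index conjecture of Boston--Jones recalled in the introduction. The problem reduces to a finite-index theorem for $G(f_c)$. To produce one, I would search at each level for a prime $p_n$ dividing $f_c^n(\gamma)$ but dividing neither $c$ nor any earlier $f_c^k(\gamma)$; such primitive prime divisors exist for $n \gg 0$ under ABC by the theorem of Gratton--Nguyen--Tucker cited above. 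Provided $v_{p_n}(f_c^n(\gamma))$ is odd, no element of $H_n$ shares a squarefree part with $f_c^n(\gamma)$, so $[f_c^n(\gamma)] \notin H_n$ and we are done.

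The decisive step --- and the one I expect to be the main obstacle --- is controlling the parity of $v_{p_n}(f_c^n(\gamma))$: a priori a primitive prime divisor can appear to even multiplicity, in which case it fails to obstruct membership in $H_n$. Here I would appeal to Granville's ABC-conditional theorem on integral points on quadratic twists of a fixed hyperelliptic curve, applied to the family $d\,y^2 = f_t^n(\gamma)$ with $d$ running over representatives of $H_n$, to force the squarefree part of $f_c^n(\gamma)$ to contain $p_n$ with odd valuation. In the special case $\gamma = 0$ with $c \in \mathbb{Z}$, $c \neq -2$, and $-c \notin \mathbb{Q}^{\ast 2}$, the irreducibility and critical-infinity hypotheses are classical (Stoll), so only the parity control remains; the recursion $f_c^n(0) = (f_c^{n-1}(0))^2 + c$ permits direct $p$-adic bookkeeping, but uniform parity control over all large $n$ still appears to require the full strength of ABC.
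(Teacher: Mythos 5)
This statement is a \emph{conjecture}, and the paper does not prove it; what the paper actually proves is Theorem~2, which is the ``in particular'' clause ($\gamma=0$, $c\in\mathbb{Z}$) conditionally on ABC. Your argument is therefore not a proof of the stated claim: after the (correct) translation of $c\in\pi(V_n(\mathbb{Q}))$ into the condition $[f_c^n(\gamma)]\in H_n$, you reduce the assertion to the Boston--Jones finite-index conjecture, which is itself open. Reducing one open conjecture to another is a fair observation --- the paper's Remark~1.4 makes essentially the same point --- but it closes nothing.

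For the integer case, the route you then take diverges from the paper's proof of Theorem~2 and contains a genuine gap. You want primitive prime divisors $p_n$ of $f_c^n(\gamma)$ with $v_{p_n}\bigl(f_c^n(\gamma)\bigr)$ odd, and you correctly identify the parity of $v_{p_n}$ as the obstacle. But the proposed remedy misapplies Granville's theorem: that result concerns rational points on quadratic twists $dy^2=g(x)$ of a \emph{single fixed} hyperelliptic curve $y^2=g(x)$ as the squarefree $d$ varies, and it outputs a \emph{size} bound on the $x$-coordinate, not $p$-adic valuation data. Applying it to the family $dy^2=f_t^n(\gamma)$, in which the underlying curve changes with $n$, is not what the theorem licenses, and in any case it cannot manufacture odd multiplicity.

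The paper's own argument for Theorem~2 sidesteps parity entirely. Assuming $\lvert\Gal(K_n/K_{n-1})\rvert<2^{2^{n-1}}$, one gets $dy^2=f^n(0)$ with $d$ squarefree and supported on primes dividing $2\prod_{j<n}f^j(0)$ (discriminant/ramification bound). Rewriting, $\bigl(f^{n-2}(0),\,y\bigr)$ is a point on the twist $dy^2=f^2(x)$ of the \emph{fixed} genus-one curve $y^2=f^2(x)$; Granville then gives $\lvert f^{n-2}(0)\rvert\ll\lvert d\rvert^{1/(2+o(1))}$. Since every prime of $d$ also divides $f^n(0)$ and hence some $f^j(0)$ with $j\le\lfloor n/2\rceil$, the bound $\lvert d\rvert\le\lvert 2\prod_{j\le\lfloor n/2\rceil}f^j(0)\rvert$ clashes with the doubly-exponential growth of the orbit for large $n$. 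It is this growth contradiction, not parity control, that drives the proof. (The paper does remark that a primitive-prime-divisor route via Gratton--Nguyen--Tucker also yields finite index --- but then that reference, not Granville, must supply the oddness.) Finally, your argument addresses only the integer case; the full conjecture for rational $\gamma$ and $c$ remains untouched even conditionally.
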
 
\begin{remark}Note that Conjecture $1$ implies that for every $\gamma$ and $c$ there exists an $n(\gamma,c)=n(c)$, such that $c\notin S_{\gamma}^{(m)}$ for $m\geq n(c)$, a slightly weaker statement than the one presuming $\Gal(f)$ has finite index in $\Aut(T)$. In words, if one wishes to establish that $\Gal(f^n_{\gamma,c})$ is as large as possible for every $n$, one need only check it to some bounded stage. 
\end{remark}   
Forcing $f$ to satisfy the hypothesis of Conjecture $1$ is relatively easy; see \cite{Jones2}. Hence a first step to prove such a result would be to to understand the varieties $V_n$ for a fixed $\gamma$. Are the $V_n$ nonsingular? If singular, what do their normalizations look like? Note that $V_n(\mathbb{Q})$ will be finite for $n$ large enough \cite{Falt}. After normalizing, can one say anything about the simple factors or ranks of the corresponding Jacobians? How much bigger is $V_n(\mathbb{Q})$ than the points coming from small $n$-th iterates? 

We answer these questions for the first nontrivial case $\gamma=0$ and $n=3$ in Theorem $3$ of section $3$, which states that $V_{3}(\mathbb{Q})=E_1(\mathbb{Q})\cup E_2(\mathbb{Q})$, and each $E_i$ is an elliptic curve whose rational points have rank one. Moreover, the points on $V_{3}(\mathbb{Q})$ not coming from those in $S_0^{(3)}$ have $t$-coordinate either $0$ or $-2$.   
\end{section}

Before establishing this result, we show how the theory of quadratic twists of hyperelliptic curves relates to the dynamics of quadratic polynomials. 
\begin{section}{Twists of Hyperelliptic Curves and the Finite Index Conjecture} 
As a first illustration of using curves to study the dynamics of quadratic polynomials, we show how one can use a theorem of Granville's on the rational points on quadratic twists of a hyperelliptic curve, assuming the ABC conjecture over $\mathbb{Q}$, to prove a finite index result in the case $\gamma=0$ and $c$ is an integer. 
\begin{theorem} Assume the ABC conjecture over $\mathbb{Q}$, and suppose that $c$ is an integer such that $f_c(x)=x^2+c$\; is critically infinite and every iterate of $f_c$ irreducible. Then for all but finitely many $n$, we have that $|\Gal(K_{n,c}/K_{n-1,c})|=2^{2^{n-1}}$. In particular, the full arboreal representation of $\Gal(f)$ inside $\Aut(T)$ has finite index.  
\end{theorem}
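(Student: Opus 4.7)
\medskip
\noindent\emph{Proof plan.} Set $a_m := f_c^m(0)$. Under the standing hypotheses, the enumeration of quadratic subfields of $K_{n,c}$ by $\mathcal{G}_{n-1}$ shows that $|\Gal(K_{n,c}/K_{n-1,c})| < 2^{2^{n-1}}$ if and only if $c \in S_0^{(n)}$, if and only if there exists $g \in \mathcal{G}_{n-1}$ with $g(c)\cdot a_n \in (\mathbb{Q}^\times)^2$. Since every such $g(c)$ equals $\pm\prod_{i \in S} a_i$ for some nonempty $S \subseteq \{1,\dots,n-1\}$, this is equivalent to $a_n$ lying in the $\mathbb{F}_2$-span of $\{-1, a_1, \dots, a_{n-1}\}$ inside $\mathbb{Q}^\times/(\mathbb{Q}^\times)^2$. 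A clean sufficient obstruction is the existence of a prime $p$ dividing $a_n$ to odd multiplicity while dividing no $a_k$ with $k < n$, i.e., an \emph{odd-order primitive prime divisor} of $a_n$.

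To produce such a prime for large $n$, I would combine two consequences of the ABC conjecture. First, the primitive prime divisor theorem of Gratton--Nguyen--Tucker \cite{PrimitiveDivisors}, itself conditional on ABC, guarantees that $a_n$ has a primitive prime divisor once $n$ is sufficiently large. To upgrade it to odd multiplicity, fix any $k \geq 2$ and consider the polynomial $F(x) := f_c^k(x) \in \mathbb{Z}[x]$, which is separable of degree $2^k \geq 4$ because $f_c$ is critically infinite. Granville's theorem on integral points on quadratic twists of $y^2 = F(x)$, assuming ABC, yields a lower bound of the form $s(F(m)) \gg_{k,\epsilon}|m|^{2^k-2-\epsilon}$ on the squarefree part $s(F(m))$, valid off a finite exceptional set of integers $m$. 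Taking $m = a_{n-k}$ along the critical orbit and using the identity $a_n = F(a_{n-k})$ gives $s(a_n) \gg |a_{n-k}|^{2^k-2-\epsilon}$; for $k$ chosen large, this forces the squared part of $a_n$ to be negligible compared with $a_n$, so the primitive prime divisor supplied by Gratton--Nguyen--Tucker must appear to odd order. Hence $c \notin S_0^{(n)}$ for all but finitely many $n$, and the finite-index assertion is immediate, as only finitely many stages in the tower contribute a less-than-maximal jump.

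The delicate technical point is the quantitative matching of bounds in the upgrade step: Granville's lower bound $s(a_n) \gg |a_n|^{1 - (2+\epsilon)/2^k}$ and the crude upper bound $\prod_{i<n}|a_i| \approx |a_n|^{1-2^{-(n-1)}}$ on squarefree parts of $\mathbb{F}_2$-span elements are both close to $|a_n|$, so a pure size comparison cannot by itself close the argument. One is forced to route through Gratton--Nguyen--Tucker and then separately control multiplicities via Granville's estimate, rather than argue by a blunt inequality. A natural fallback would be to apply Granville's theorem uniformly across the finitely many quadratic twists $y^2 = g(t)f_t^n(\gamma)$ with $g \in \mathcal{G}_{n-1}$, exploiting their separability as hyperelliptic curves.
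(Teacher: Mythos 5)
Your proposal diverges genuinely from the paper's argument, and as written it contains a gap at the decisive step. The paper also uses Granville conditionally on ABC, but applies it to quadratic twists of the \emph{fixed} degree-four curve $y^2 = f^2(x)$: if $dy^2 = f^n(0)$ with $\mathbb{Q}(\sqrt d)\subset K_{n-1}$, then $\bigl(f^{n-2}(0), y\bigr)$ is a rational point on the twist $dy^2 = f^2(x)$, and Granville gives $|f^{n-2}(0)| \ll |d|^{1/(2+o(1))}$. The heart of the paper's proof is then a purely elementary divisibility argument showing that $d$ is \emph{small}: every prime of $d$ ramifies in $K_{n-1}$, hence divides $2\prod_{j<n} f^j(0)$ by the discriminant formula; each also divides $f^n(0)$ (clearing denominators and using squarefreeness); and the recursion $f^{j+m}(0)\equiv f^m(0)\pmod p$ when $p\mid f^j(0)$ shows every odd such prime already divides some $f^j(0)$ with $j\le\lfloor n/2\rceil$. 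Thus $|d|\le |2\cdot f(0)\cdots f^{\lfloor n/2\rceil}(0)|$, and the resulting inequality $|f^{n-2}(0)|\ll |2\cdot f(0)\cdots f^{\lfloor n/2\rceil}(0)|^{1/(2+o(1))}$ fails for large $n$ by growth of the critical orbit. No appeal to Gratton--Nguyen--Tucker is made.

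The gap in your version is the ``upgrade to odd multiplicity.'' Granville's estimate $s(a_n)\gg |a_n|^{1-(2+\epsilon)/2^k}$ bounds the total squared part of $a_n$ by $\ll |a_{n-k}|^{2+\epsilon}$; it does \emph{not} locate the parity of the exponent of any particular prime. The Gratton--Nguyen--Tucker primitive prime $p$ could perfectly well divide $a_n$ to even order $2m$ with $p^{2m}$ far smaller than $|a_{n-k}|^{2+\epsilon}$, since nothing forces $p$ to be large. So the implication ``squared part negligible $\Rightarrow$ primitive prime appears to odd order'' does not follow, and the quantitative mismatch you flag is genuine, not merely a presentation inconvenience. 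Your fallback of applying Granville to the curves $y^2 = g(t)f_t^n(\gamma)$ also does not quite fit the theorem's hypotheses, since those curves vary with $n$ rather than being twists of a single hyperelliptic curve. A secondary, more minor issue: $|\Gal(K_{n,c}/K_{n-1,c})|<2^{2^{n-1}}$ is not equivalent to $c\in S_0^{(n)}$ (the latter requires maximality at level $n-1$); the paper instead invokes Stoll's Lemma~1.6 together with the ramification description of the quadratic subfields of $K_{n-1}$, which applies without assuming maximality below level $n$. If you want to save a GNT-based route, you would need an ABC-conditional statement that the squarefree kernel of $a_n$ (not just $a_n$ itself) has a primitive prime divisor for large $n$; that is a strictly stronger input than standard primitive prime divisor results, and is exactly what the paper's divisibility-plus-Granville argument is engineered to bypass.
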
 
\begin{remark} If $c$ is an integer not equal to $-2$ such that $-c$ is not a square, then $c$ will satisfy the hypotheses of Theorem $2$; see \cite{Jones2}. 
\end{remark}
\begin{proof}We fix $c$ and suppress it in all notation. First note that the adjusted critical orbit $\{-c,f^2(0),f^3(0)\dots\}$ of $f$ can contain at most finitely many squares, for otherwise we would obtain infinitely many integer points $\{(f^{n_k-2}(0),y_{n_k})\}_{k=1}^\infty$ on the smooth hyperelliptic curve $y^2=f^{2}(x)$, and moreover this set of points is unbounded ($f$ is critically infinite). This contradicts Siegel's theorem on the finiteness of integral points on curves of genus at least one \cite{Silv}. 
\begin{remark} For this particular family of quadratic polynomials, one can show that the critical orbit in fact does not contain any squares, see Corollary 1.3 of \cite{Stoll-Galois}. However, this sort of argument generalizes.  
\end{remark}

Therefore, we may assume that if $|\Gal(K_{n}/K_{n-1})|\neq2^{2^{n-1}}$, then for some $y\in\mathbb{Q}$,
\[dy^2=f^n(0), \; \text{with}\;\;\mathbb{Q}(\sqrt{d})\subset K_{n-1};\] see \cite{Stoll-Galois} Lemma 1.6. It is our goal to show that $n$ is bounded. Note that the curve \[C:= \{dy^2=f^2(x)\}\; \text{has a rational point} \; (f^{n-2}(0),y),\] and $C$ is smooth since all iterates of $f$ are assumed to be separable. Moreover, $d=\prod_ip_i$, where the $p_i$'s are distinct primes dividing $2\cdot\prod_{j=1}^{n-1}f^j(0)$. To see this latter fact, we use a formula for the discriminant $\Delta_m$ of $f^m$, \[\Delta_m=\pm\Delta_{m-1}^2\cdot2^{2^m}\cdot f^m(0),\] given in Lemma 2.6 of \cite{Jones2}. It follows that the only rational primes which ramify in $K_{n-1}$ are the primes dividing  $2\cdot\prod_{j=1}^{n-1}f^j(0)$. Since the primes which divide $d$ must ramify in $K_{n-1}$, we obtain the desired description of the $p_i$.  

Now we apply a theorem of Granville's on the rational points on twists of a hyperelliptic curve,  which assumes the ABC conjecture, to $C$ (Theorem 1 of \cite{Twists}). This yields  
\begin{equation}\label{Twist} |f^{n-2}(0)| \ll |d|^{\frac{1}{2+ o(1)}}.\end{equation}

We claim that each $p_i$ must also divide $f^n(0)$. To see this, write $v=a/b$, for coprime integers $a$ and $b$, so that $p_1p_2\dots p_ta^2=b^2f^n(0)$. Hence each $p_i$ divides $b$ or $f^n(0)$. If $p_i$ divides $b$, then the fact that these primes are assumed to be distinct also implies that $p_i$ must divide $a$. It follows that $v$ must be an integer, and that $p_i$ divides $f^n(0)$ as claimed. 

However, we also have that the odd $p_i$ divide $f^j(0)$ for some $j\leq n-1$, so that such $p_i$ must divide $f^{n-j}(0)$. Therefore, we can choose $j\leq \lfloor\frac{n}{2}\rceil$ for every odd $p_i$, where $\lfloor x \rceil$ denotes the nearest integer function. Since $d$ is assumed to be square free, we obtain  
\begin{equation}\label{Twist} |f^{n-2}(0)| \ll |d|^{\frac{1}{2+ o(1)}}\leq|2\cdot f(0)\cdot f^2(0)\dots f^{\lfloor\frac{n}{2}\rceil}(0)|^{\frac{1}{2+ o(1)}},\end{equation} 
which is violated for sufficiently large $n$. For example, if $c>0$, then $f^m(0)>f(0)\cdot f^2(0)\dots f^{m-1}(0)$ for all $m$. Our inequality then becomes  
\begin{equation}\label{Twist} |f^{n-2}(0)| \ll |d|^{\frac{1}{2+ o(1)}}\leq|2\cdot f(0)\cdot f^2(0)\dots f^{\lfloor\frac{n}{2}\rceil}(0)|^{\frac{1}{2+ o(1)}}\leq |2\cdot f^{\lfloor\frac{n}{2}\rceil}(0)|^{\frac{2}{2+o(1)}},\end{equation} 
 a more obviously violated relation. On the other hand, if $c\leq-3$, then we use the fact that $|f^m(0)|\geq(f^{m-1}(0)-1)^2$, as seen in \cite[Corollary 1.3]{Stoll-Galois}, to show that \[ |f(0)\cdot (f(0)-2)\cdot (f^2(0)-2)\dots\cdot (f^{n-3}(0)-2)|\leq |f^{n-2}(0)|\ll |d|^{\frac{1}{2+ o(1)}}\leq|2\cdot f(0)\cdot f^2(0)\dots f^{\lfloor\frac{n}{2}\rceil}(0)|^{\frac{1}{2+ o(1)}}.\] This is impossible for large enough $n$, and we conclude that $n$ is bounded as claimed.          
\end{proof} 
\begin{remark} 	Also assuming the ABC conjecture, Gratton, Nguyen, and Tucker have deduced the existence of primitive prime divisors in the critical orbit of $f_c$. This result, when coupled with the description of the primes that ramify in the quadratic extensions of $K_m$, also imply a finite index result; see \cite{PrimitiveDivisors} . 
\end{remark}

For a given $f$, as seen in the proof of Theorem $2$, an understanding of the hyperelliptic curves \[B_{n,f}:= \{(x,y): y^2=f_{\gamma,c}^n(x)\},\] and their quadratic twists coming from elements of the critical orbit, can provide information about the Galois behavior of $f$'s iterates. If $m\geq n$, then $B_{m,f}$ maps to $B_{n,f}$ via $(x,y)\rightarrow (f^{m-n}(x),y))$, and so we have a decomposition $\Jac(B_{m,f})\sim\Jac(B_{n,f})\times A_{m,n}$, for some abelian variety $A_{m,n}.$ Determining the factorization of $A_{m,n}$ into simple abelian varieties could be a step towards understanding the dynamical Galois groups ascociated to $f$. 

As an example, note that $B_{3,f}$ also maps to the curve  \[B_{3,2}:= \{(x,y): y^2=(x-c)\cdot f_{\gamma,c}^2(x)\},\] of genus $2$, via $(x,y)\rightarrow (f(x), (x-\gamma)\cdot y)$ (there are of course analogous maps at every stage). Moreover, we have similar maps for any quadratic twist of $B_{3,f}$. If $C^{(d)}$ denotes the quadratic twist of a hyperelliptic curve $C$ by $d\in\mathbb{Q}/(\mathbb{Q})^2$, then we conclude this section with an example that links the dynamics of quadratic polynomials with the ranks of the Jacobians of $B_{3,2}^{(d)}$, using the method of Chabauty and Coleman \cite{Poonen}. 

\textbf{Example 1}: Consider $f=x^2+3$ and the corresponding hyperelliptic curve \[B_{3,2}:=\{(x,y): y^2=(x-3)\cdot (x^4+6x+12)\},\] suppressing $f$,$\gamma$ and $c$ in the notation. We study this example in particular because the Galois groups of the iterates of $f$ are not known in this case. $B_{3,2}$ has good reduction at $p=5$, and $|B(\mathbb{F}_5)|=5$. Fix a twist $d=\prod_ip_i$, where the $p_i$'s are distinct primes dividing $2\cdot\prod_{j=1}^{n-1}f^j(0)$ and note that $B_{3,2}^{(d)}$ also has good reduction at $p=5$, since otherwise $5|d$ and $2$ is a quadratic residue in $\mathbb{F}_5$. Checking the possible values of $d\bmod{5}$ we conclude that $|B_{3,2}^{(d)}(\mathbb{F}_5)|\leq7$.

Suppose $d$ is such that $dy^2=f^n(0)$ has a rational solution $y$ for some $n$ (such a rational point forces the $n$-th iterate of $f$ to have smaller than expected Galois group). Then \cite[Theorem 5.3 (b)]{Poonen} implies that either $\rank(\Jac(B_{3,2}^{(d)})(\mathbb{Q}))\geq 2$ or the $\#\{n:dy^2=f^n(0)\}\leq4$, since each $n$ would give a point $(f^{n-2}(0), f^{n-3}(0)\cdot\pm{y})$ on $B_{3,2}^{(d)}$.

A detailed examination of these curves and their twists may very well provide a link between Galois phenomena and curves whose Jacobians have large rank. We will not address this here, but rather we begin a concrete study of a particular family of quadratic polynomials and a particular $n$, parametrizing their Galois behavior using the rational points on elliptic curves.   
 \end{section}
\begin{section}{Classification of Small Third Iterates when $\gamma=0$}
Throughout this section, we fix the critical point $\gamma=0$, hence study the family $f_c=x^2+c$, and suppress $\gamma$ in all notation. 

A particularly interesting example is $c=3$ and $n=3$. Although \[\Gal((x^2+3)^2+3)\cong D_4\cong\Aut(T_2),\] one computes that $|\Gal(((x^2+3)^2+3)^2+3)|=64$ , and hence $f_3$ has a small $3$rd iterate (generically $\Aut(T_n)$ is a wreath product of many copies of $\mathbb{Z}/2\mathbb{Z}$, an order $2^{2^n-1}$ Sylow $2$-subgroup of $S_{2^n}$). In fact, we will show $c=3$ is the only integer such that $f_c$ has a small third iterate. To prove this, consider the curves 
\begin{align*}
   E_1: -y^2&=t^3+2t^2+t+1,\\
   E_2: y^2(t+1)&= t^3+2t^2+t+1,\\
   V_3&=E_1\cup E_2\subset\mathbb{A}_{\mathbb{Q}}^2,
\end{align*}
where $V_3$ is the union of two elliptic curves (rather the union of their affine models). The following theorem provides an association of rational numbers with small $3$rd iterate and the rational points of $V_3$. As before, $\pi$ denotes the projection onto the first coordinate.  
\begin{theorem} There is an inclusion 
\begin{equation}
  \label{defS3}
  S_0^{(3)}:=\{c\in\mathbb{Q}| \Gal(f_c^3)\ncong\Aut(T_3),
  \Gal(f_c^2)\cong \Aut(T_2 ) \}\subset \pi(V_3(\mathbb{Q})).
\end{equation}
Moreover, the complement of $S_0^{(3)}$ in $\pi(V_3(\mathbb{Q}))$ is $\{0,-2\}$. In words, the rational  $t$-coordinates of $V_3(\mathbb{Q})$, excluding $t=-2$ and $t=0$, are in bijection with the rational numbers having small third iterate. 
\end{theorem}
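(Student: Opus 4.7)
The plan is to prove the two containments $S_0^{(3)} \subseteq \pi(V_3(\mathbb{Q}))$ and $\pi(V_3(\mathbb{Q})) \setminus \{0,-2\} \subseteq S_0^{(3)}$ separately. For the inclusion $S_0^{(3)} \subseteq \pi(V_3(\mathbb{Q})) = \pi(E_1(\mathbb{Q}) \cup E_2(\mathbb{Q}))$, I begin by computing $f_t(0) = t$, $f_t^2(0) = t(t+1)$, and $f_t^3(0) = t \cdot Q(t)$ with $Q(t) := t^3+2t^2+t+1$, and enumerating $\mathcal{G}_2 = \{-t,\,t(t+1),\,-t^2(t+1)\}$. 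By Lemma 3.2 of \cite{Jones2}, any $c \in S_0^{(3)}$ produces a rational point on the curve $g(t)y^2 = f_t^3(0)$ for some $g \in \mathcal{G}_2 \cup \{1\}$. Since $c \in S_0^{(3)}$ forces $c \neq 0$ (the iterates of $f_0 = x^2$ are inseparable), the common factor of $t$ cancels, reducing each case to a cleaner equation. Two of these ($g = -t$ and $g = t(t+1)$) are precisely $E_1$ and $E_2$. To rule out the other two ($g = 1$, giving $C_3: y^2 = tQ(t)$, and $g = -t^2(t+1)$, giving $D_3: -t(t+1)y^2 = Q(t)$), I show neither contributes a new $c \in S_0^{(3)}$: $C_3$ is birational (via $s = 1/t$, $z = y/t^2$) to the elliptic curve $z^2 = s^3 + s^2 + 2s + 1$, which has rank zero and torsion $\mathbb{Z}/3\mathbb{Z}$ (generated by the 3-torsion point $(0,1)$, as verified by a tangent-line computation), so $\pi(C_3(\mathbb{Q})) = \{0\}$; and $D_3$ is birational to the genus-two hyperelliptic curve $u^2 = -t(t+1)Q(t)$ (via $u = t(t+1)y$), whose rational points can be shown to lie only over the degenerate values $t \in \{0,-1\}$ by a Chabauty--Coleman argument or Mordell--Weil sieve.

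For the reverse containment, I take $(c,y) \in E_1(\mathbb{Q}) \cup E_2(\mathbb{Q})$ with $c \notin \{0,-2\}$. That $\Gal(f_c^3) \ncong \Aut(T_3)$ follows immediately from Remark 1.2: the rational point witnesses $-Q(c)$ or $(c+1)Q(c)$ as a rational square, collapsing one of the quadratic subfields of $K_{3,c}/K_{2,c}$. The harder task is to verify $\Gal(f_c^2) \cong \Aut(T_2) \cong D_4$, which by Stoll's criterion (Theorem 1 of \cite{Stoll-Galois} or Lemma 2.2 of \cite{Jones2}) amounts to each of $-c$, $c(c+1)$, and $-(c+1)$ being a rational non-square. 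For each of these three potential obstructions, I intersect with $E_1$ and with $E_2$ to form six auxiliary curves; for each, I compute the Mordell--Weil group (if elliptic) or appeal to Faltings' theorem plus a direct search (if of genus $\geq 2$). For example, imposing $-c = s^2$ on $E_1$ yields $y^2 = s^6 - 2s^4 + s^2 - 1$; substituting $u = s^2$ produces the elliptic curve $Y^2 = u^3 - 2u^2 + u - 1$, whose Mordell--Weil group I compute explicitly, and then verify that none of its rational points has $u$ a rational square. The exceptional values $c = 0$ and $c = -2$ emerge naturally: $c = 0$ from $(0, \pm 1) \in E_2(\mathbb{Q})$ (where $-c = 0$ and $c(c+1) = 0$ trivially satisfy the obstructions), and $c = -2$ from $(-2, \pm 1) \in E_1(\mathbb{Q}) \cap E_2(\mathbb{Q})$ (where $-(c+1) = 1$ is a square).

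The main obstacle I foresee is the case-by-case analysis in the second part: six auxiliary curves must be addressed. Most should reduce to elliptic curves of modest conductor handleable by $2$-descent, but any with positive rank requires a finer argument (for instance, density considerations or a Chabauty-type result) to confirm that only finitely many Mordell--Weil points satisfy the additional square-root restriction, and that none produce new $c$-values. A further subtlety, shared with Part 1's analysis of $D_3$, is that the birational transformations can introduce or remove degenerate points, which must be tracked carefully when passing back to the original variables.
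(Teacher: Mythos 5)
Your outline is the same as the paper's: establish $S_0^{(3)}\subset\pi(V_3(\mathbb{Q}))$ by analyzing the four candidate curves $g(t)y^2=f_t^3(0)$ for $g\in\{1\}\cup\mathcal{G}_2$ (disposing of $g=1$ via the rank-zero curve $z^2=s^3+s^2+2s+1$, and the $g=-t^2(t+1)$ case via its genus-two hyperelliptic model), and then verify the reverse containment by intersecting each $E_i$ with the three loci where $-c$, $c^2+c$, or $-(c+1)$ is a square and showing only $c\in\{0,-2\}$ survives. Your identification of the exceptional points $(0,\pm 1)\in E_2$ and $(-2,\pm 1)\in E_1\cap E_2$ is correct.

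However, Part 2 contains a genuine gap: you assert that the six auxiliary curves are mostly ``elliptic curves of modest conductor handleable by $2$-descent,'' and you offer ``Faltings' theorem plus a direct search'' and vague ``density considerations or a Chabauty-type result'' as the fallback. This materially underestimates the difficulty, and ``Faltings plus a search'' is not a proof: finiteness alone gives no termination criterion. In reality, the $c^2+c$ condition on $E_1$ and the $-c$ condition on $E_2$ each produce degree-eight, \emph{genus-three} hyperelliptic curves ($\mathcal{C}'$ and $\mathcal{B}$ in the paper), because the extra factor $t+1$ (resp. the parametrization of the conic $c^2+c=x^2$) raises the degree. These are handled in the paper by a two-cover (unramified-cover) descent in the style of Stoll, not by a direct elliptic reduction. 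Moreover, the $-(c+1)$ case produces a genus-two curve $\mathcal{C}:y^2=X^6+X^4-1$ whose Jacobian has rank two, so classical Chabauty is unavailable; the paper has to carry out an elliptic-curve Chabauty argument over $\mathbb{Q}(\alpha)$ (in the style of Flynn--Wetherell) with explicit formal-group computations modulo $3^4$. Only two of the six cases ($H$ and $\mathcal{A}$) reduce to a rank-zero elliptic curve by a simple $2$-descent, as you envision. Until the genus-three unramified-cover descents and the elliptic Chabauty computation are actually carried out, the complement $\{0,-2\}$ is not established.

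Two smaller points. First, you should verify $f_c^3$ is irreducible before invoking the lemma that ``$f_c^3(0)\in(K_{2,c})^2$ collapses a quadratic subfield'' — this is easy given $\Gal(f_c^2)\cong\Aut(T_2)$ and $f_c^3(0)\notin\mathbb{Q}^2$, but should be said (and, if $f_c^3$ were reducible, note that $\Gal(f_c^3)\ncong\Aut(T_3)$ holds automatically, so $c\in S_0^{(3)}$ either way). Second, be careful when cancelling $t$ from $gy^2=f_t^3(0)$: your model $D_3:-t(t+1)y^2=Q(t)$ and the paper's $C:-(t+1)y^2=tQ(t)$ differ by the substitution $y\mapsto ty$, which alters what happens over $t=0$; since $t=0$ is excluded anyway this is harmless here, but you flag the issue yourself and should resolve it rather than leave it as a caveat.
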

\begin{proof}  
We first establish that if $c\in\mathbb{Q}^*$, then $f_c^3(0)\notin(\mathbb{Q})^2$. To do this, we study the rational points on the quartic curve, \[C_3: y^2=f_t^3(0)=(t^2+t)^2+t=t^4+2t^3+t^2+t.\] Then $C_3$ is birational (over $\mathbb{Q}$) to the elliptic curve given by the Weirstrass equation:
\[
  C_3': y^2= x^3+x^2+2x+1,\quad (t,y)\rightarrow (1/t,y/t^2).
\]
 Next we compute the rank of the rational points on $C_3'$. There are various ways to show $\rank(C_3'(\mathbb{Q}))=0$, either by employing a 3-descent \cite{Coh}, or computing its analytic rank (a theorem of Gross-Zagier and Kolyvagin implies the algebraic rank and analytic rank are equal in this case, given the modularity theorem of Wiles). Finally, by reducing our curve at various primes of good reduction (discriminant is $-2^4\cdot23$), we determine that the size of $C_3'(\mathbb{Q})_{Tor}=C_3'(\mathbb{Q})$ is three, and conclude $C_3'(\mathbb{Q})=\{(0,\pm{1}),\infty\}$. For a reference on computing various objects associated to elliptic curves, see either \cite{Coh} or \cite{Silv}. It follows that the only $c\in\mathbb{Q}$ with $f_c^3(0)$ a rational square is $c=0$.

Now for the proof of Theorem $2$, which will come in two parts: first showing that $S^{(3)}\subset \pi(V_3(\mathbb{Q}))$, followed by the computationally more difficult part, determining the complement of $S^{(3)}$. The latter task is equivalent to finding all rational points on certain higher genus curves.

 Suppose $c\in S^{(3)}$, then in particular $-c\notin\mathbb{Q}^2$ and $f_c^2(0)=c^2+c\notin\mathbb{Q}(\sqrt{-c})^2$. In fact this is a necessary and sufficient condition for $\Gal(f_c^2)\cong\Aut(T_2)$; see \cite{Stoll-Galois} or \cite{Jones1}. Let $K_{2,c}$ be the splitting field of $f_c^2$, by assumption a degree eight extension of the rationals. In this generic case, there is a simple description of the Galois group, namely $\Gal(K_{2,c}/\mathbb{Q})\cong\Aut(T_2)\cong D_4$. It follows that $K_{2,c}$ contains exactly three quadratic subfields: \[K_{2,c}^1=\mathbb{Q}(\sqrt{-c}),\hspace{.25in} K_{2,c}^2=\mathbb{Q}(\sqrt{c^2+c}), \hspace{.25in}K_{2,c}^3=\mathbb{Q}(\sqrt{-(c+1)}).\]
Let $K_{3,c}$ be the splitting field of the third iterate of $f_c$. Since the set $\{-c, f_c^2(0), f_c^3(0)\}$ does not contain a rational square, $f_c^3$ is necessarily irreducible (see Theorem $2.2$ in \cite{Jones1}). It follows that $\Gal(K_{3,c}/\mathbb{Q})\ncong\Aut(T_3)$ if and only if $f_c^3(0)=c^4+2c^3+c^2+c\in (K_{2,c})^2$ (Lemma 2.2 \cite{Jones2}).  

 Whence, $c\in S^{(3)}$ implies that there exists a $y\in K_{2,c}$ such that $y^2=f_c^3(0)$, i.e., $(c,y)\in C_3(K_{2,c})$. In particular $y\in\mathbb{Q}$ or $\mathbb{Q}(y)=K_{2,c}^i$ for some $1\leq i\leq 3$. Since the first case is impossible ($c\neq 0$ and opening remark), $\mathbb{Q}(y)=K_{2,c}^i$ for some $i$. Also $C_3(\mathbb{Q})=\{(0,0)\}$ implies $y=y'\sqrt{-c},y'\sqrt{c^2+c},$ or $y'\sqrt{-(c+1)}$ for some $y'\in\mathbb{Q}$.
 
For example, if $y=x'+y'\sqrt{-c}$ with $x',y'\in\mathbb{Q}$, then \[2x'y'\sqrt{-c}=(y')^2c-(x')^2+c^4+2c^3+c^2+c\in\mathbb{Q},\] so that either $x'$ or $y'$ is zero. But $y'$ cannot be zero, since this would yield a nontrivial rational point on $C_3$.   

Replacing $y$ with $y'$ and dividing by $c$ if convenient, we obtain a rational point on one of the following curves:\begin{align*}
  {E_1}: -y^2&=t^3+2t^2+t+1,\\
  {E_2}: y^2(t+1)&= t^3+2t^2+t+1,\\
  {C} : -y^2(t+1)&=t^4+2t^3+t^2+t.
\end{align*}
\begin{remark}
Using the group law on $E_1$ and $E_2$, we obtain an easy way to compute interesting rational numbers whose third iterate has a Galois group which is smaller than expected: $(-2/3, 25/9 ), (6/19, 515/361)\in E_2(\mathbb{Q})$ and $(-2,1), (-17/4,-53/8)\in E_1(\mathbb{Q})$, corresponding to $\Gal(f_c^3)$ of sizes $16, 64, 8$, and $64$ respectively.      
\end{remark}

We show that we need not consider rational points on $C$. 

\begin{lemma} $C(\mathbb{Q})=\{(0,0)\}$, and so we need not consider the case that $(c,y)$ is a point on $C_3(K_{2,c}^3)$.   
\end{lemma}
\begin{proof} $C$ maps birationally to the hyperelliptic curve $y^2= x(x-1)(x^3-2x^2+x-1)$ of genus two via $(t,y)\rightarrow(-t,y(t+1))$. Let $J$ be the Jacobian of the hyperelliptic curve. Following the $2$-descent procedure described in \cite{Stoll-de}, we determine that the $2$-Selmer group of $J$ is isomorphic to $\mathbb{Z}/2\mathbb{Z}\oplus\mathbb{Z}/2\mathbb{Z}$. Since we also have $J(\mathbb{Q})_{Tor}\cong\mathbb{Z}/2\mathbb{Z}\oplus\mathbb{Z}/2\mathbb{Z}$, it follows from the usual inequality, \[\rank(J(\mathbb{Q}))\leq\dimension_{\mathbb{F}_2}\Sel^{(2)}(\mathbb{Q}, J)- \dimension_{\mathbb{F}_2}J(\mathbb{Q})[2],\] that $\rank(J(\mathbb{Q}))=0$ (see Stoll's paper on implementing 2-descent for various computational heuristics, many of which have been made available in Magma \cite{Stoll-de}). In the rank zero case the method of Chabauty and Coleman can be used to prove that the only rational points on our hypereliptic curve are the Weierstrass points. In fact, in this special case, Magma can be used to prove it \cite{Magma}. Computing preimages of these points, we determine that $(0,0)$ is the only non-infinite rational point on $C$.  
\end{proof}
Returning to the proof of the Theorem $3$, we have seen that if $c\in S^{(3)}$, then we obtain a rational point $(c,y)$ on $V_3$. Now to determine the complement of $S^{(3)}\subset \pi_3(V_3(\mathbb{Q})).$ Fix a point $P=(c,y)\in V_3(\mathbb{Q})$ and assume $c\notin S^{(3)}.$ We can, without loss of generality, assume $c\neq 0$.

If $-c$, $-(c+1)$, and $c^2+c$ are not in $\mathbb{Q}^2$, then $c\in S^{(3)}$ (otherwise $\Gal(f_c^2)\cong\Aut(T_2)$ and $\Gal(f_c^3)\ncong\Aut(T_3)$, see \cite{KC} and \cite{Jones1}). Hence there are six cases to check, corresponding to whether $P\in E_1(\mathbb{Q})$ or $E_2(\mathbb{Q})$ and $-c,-(c+1)$, or $c^2+c$ are rational squares. We will see, applying Faltings' Theorem on the finiteness of rational points of curves with genus at least two \cite{Falt}, that there are only finitely many such $P$'s in each case. Explicitly finding these points remains a difficult task in general, but in our case all points can be found by some standard methods; see \cite{Flynn}, \cite{Stoll-R}.  

Consider first the case $P\in E_1$. 
 \par\textbf{(1.)} If $P=(c,y)\in E_1(\mathbb{Q})$ and $-c\in\mathbb{Q}^2$, write $-c=x^2$. Then $(x,y)$ is a rational point on the hyperelliptic curve $H:y^2= x^6-2x^4+x^2-1$, which has genus two. Fortunately, $H$ has no non-infinite rational points. To see this, we note that $H$ covers the elliptic curve $E':y^2=a^3+a^2+2a+1$ via the map $(x,y)\rightarrow (-1/x^2,y/x^3)$. $E'$ contains a rational subgroup $\{(0,\pm{1}),\infty\}$ of order three, and so with a little work, one can do a 3-descent on $E'$ by hand \cite{Coh}. Alternatively, a more standard 2-descent on $E'$ using Sage \cite{Sage}, shows that $(0,\pm{1})$ are the only non-infinite rational points on $E'$, which combined with the fact that $x=0$ does not give a rational point on~$H$, shows that $H$ has no non-infinite rational points. So we add no points to the complement of $S^{(3)}$ in this case. \par

\textbf{(2.)} Similarly, if $-(c+1)\in\mathbb{Q}^2$, say $-(c+1)=X^2$, then $(X,y)$ is a rational point on $\mathcal{C}:y^2=X^6+X^4-1$, again a curve of genus two. However, unlike the previous case, one must work harder to describe completely the rational points on this curve. The obstruction is that we can only say its Jacobian has rank at most $2$, precluding the possibility of applying the method of Chabauty and Coleman directly to $\mathcal{C}$ \cite{Poonen}. However, $C$ covers the elliptic curves $\mathcal{E}:y^2=x^3+x^2-1$ and $\mathcal{E}'$$:y^2=-x^3+x+1$, each of rank one, and we may apply a method of Flynn-Wetherell \cite{Flynn}, to compute the rational points on $C$.
\begin{lemma} $\mathcal{C}(\mathbb{Q})=\{(\pm{1},\pm{1}),\infty^{\pm}\}$. 
\end{lemma}
\begin{proof} We outline this approach here, conforming to the notation and conventions of \cite{Flynn}. A two descent on $\mathcal{E}$ yields $\mathcal{E}(\mathbb{Q})/2\mathcal{E}(\mathbb{Q})=\{\infty, (1,1)\}$, so that if $\alpha$ satisfies $\alpha^3+\alpha^2-1=0$, we study the two elliptic curves:
 \begin{align*}
 \mathcal{E}_1: y^2&= x(x^2+(\alpha+1)x+(\alpha^2+\alpha)),\\
 \mathcal{E}_2: y^2&=(1-\alpha)x(x^2+(\alpha+1)x+(\alpha^2+\alpha)),
 \end{align*}
defined over $\mathbb{Q}(\alpha)$. Suppose $(X,y)$ is a rational point on $\cal{C}$, from which it follows that $(x,y)=(X^2,y)$ is a rational point on $\cal{E}$. Taking the image of $(x,y)$ in $$\mathcal{E}(\mathbb{Q})/2\mathcal{E}(\mathbb{Q})\subset\mathbb{Q}(\alpha)^*/(\mathbb{Q}(\alpha)^*)^2,$$ and using the fact that $x\in\mathbb{Q}^2$, we find that $(x,y)\in\mathcal{E}_i(\mathbb{Q}(\alpha))$ for a unique choice of $i$ (see Lemma 1.1 in \cite{Flynn}). We now use the elliptic curve Chabauty method to find all points on $\mathcal{E}(\mathbb{Q}(\alpha))$ having rational $x$-coordinate, exploiting the formal groups of the curves over specified completions.
 
This more sophisticated method is not necessary for determining if $x$ gives a $\mathbb{Q}(\alpha)$-point on $\mathcal{E}_1$. A two descent on $\mathcal{E}_1$ yields \[\mathcal{E}_1(\mathbb{Q}(\alpha))\cong\mathbb{Z}/2\mathbb{Z}, \medspace\mathcal{E}_1(\mathbb{Q}(\alpha)=\{(0,0),\infty\}.\] However, $x=0$ does not give a rational point on $\mathcal{E}$, and so is irrelevant in determining $\mathcal{C}(\mathbb{Q})$.
   
 As for the second curve $\mathcal{E}_2$, the situation is more interesting. Magma, \cite{Magma}, computes that \[\Sel^{(2)}(\mathcal{E}_2,\mathbb{Q}(\alpha))\cong\mathbb{Z}/2\mathbb{Z}\oplus\mathbb{Z}/2\mathbb{Z}\quad\text{and}\quad\mathcal{E}_2(\mathbb{Q}(\alpha)_{Tor}\cong\mathbb{Z}/2\mathbb{Z},\] so that $\mathcal{E}_2(\mathbb{Q}(\alpha))\cong\mathbb{Z}/2\mathbb{Z}\oplus\mathbb{Z}$. In fact $P_0=(1,1)$ and $(0,0)$ generate the $\mathbb{Q}(\alpha)$-points on $\mathcal{E}_2$.
 
 \begin{remark} To show this fact, one can use bounds on the difference between canonical heights and the Weil height, followed by a point search \cite{SilvB}, or new lower bounds on the canonical heights over number fields \cite{T-T}.
 \end{remark}  
 
 Now, $\mathcal{E}_2$ has good reduction at $p=3$ (inert in $\mathbb{Q}(\alpha))$, and the reduction of $P_0$ has order three. Let $Q=3P_0$, so that every $P\in\mathcal{E}_2(\mathbb{Q}(\alpha))$, can be written as $S+nQ$, where $S\in\mathcal{S}=\{\infty, (0,0), \pm P_0, (0,0)\pm P_0\}.$ We see that $p=3$ satisfies the necessary criteria of 2.13 in \cite{Flynn}, and that if $x(P)\in\mathbb{Q}$, then one of the following three possibilities must be true: \[(a): P=nQ,\quad (b): P=(0,0)+nQ,\quad(c): P=nQ \pm P_0\]  
 for some $n$. We now use the formal group law on $\mathcal{E}_2(\mathbb{Q}_3(\alpha))$ to compute the $x$-coordinates of $nQ$, and hence of $P$, determining which rational $x$-coordinates are permitted (the key idea being that $\rank(\mathcal{E}_2(\mathbb{Q}(\alpha)))<|\mathbb{Q}(\alpha):\mathbb{Q}|$, a Chaubauty-like condition). 
 \begin{remark}This can actually be made into exactly a Chaubauty condition using Weil's restriction of scalars functor. Let $E/K$ be an elliptic curve and let $L/K$ be a finite extension. Then there is an abelian variety $A/K$, called the $L/K$-restriction of scalars of~$E/K$, having the following properties: (1) $A(K)\cong E(L)$. (2) $\dim A=[L:K]$. So in particular, if $\rank E(L)<[L:K]$, then $A/K$ exactly satisfies the Chaubauty condition $\rank A(K)<\dim A$.
\end{remark}  
 Let us recall the relevant power series \cite{Flynn}. If  $y^2 = g_3x^3 + g_2x^2 + g_1x$ is an elliptic curve, and $z=\frac{-x}{y}$, then \[1/x=g_3z^2
+ g_2g_3z^4+ (g_1g_3^2+g_3g_2^2)z^6+ (g_2^3g_3+3g_1g_3^2g_2)z^8 +O(z^{10}).\]
Moreover, if $(x_0,y_0)$ is not in the kernel of reduction, and $(x_0,y_0)$ + $(z/w,-1/w) = (x_3,y_3)$, then:  
\begin{align*}
{x_3= x_0+ 2y_0z + (3x_0^2g_3+2x_0g_2+g_1)z^2+ (4x_0g_3y_0+2g_2y_0)+z^3+}\\{(4x_0^3g_3^2+6x_0^2g_3g_2+2x_0g_3g_1+2x_0g_2^2+g_3y_0^2+g_2g_1)z^4+ O(z^6)}
\end{align*}
and 
\begin{align*}{\log(z)} := z+\frac{1}{3}g_2z^3+(\frac{1}{5}g_2^2+\frac{2}{5}g_1g_3)z^5+(\frac{1}{7}g_2^3+\frac{3}{7}g_0g_3^2+\frac{6}{7}g_2g_1g_3)z^7+O(z^9),\\
{\exp(z)} := z-\frac{1}{3}g_2z^3+(\frac{-2}{5}g_1g_3+\frac{2}{15}g_2^2)z^5+(\frac{-17}{315}g_2^3+\frac{22}{105}g_2g_1g_3)z^7+O(z^9).\end{align*}
Case(a) : If $P=(x,y)=n*Q$ and $z=-\frac{-x}{y}$, then set $(x_n,y_n)= n*Q$ and $z_n=\frac{-x_n}{y_n}$. We use $z_n=\exp(n\log(z))$, and group terms to write \[ \frac{1}{x_n}= \phi_0+\phi_1\alpha+\phi_2\alpha^2,\quad\phi_i\in\mathbb{Z}_3[[n]].\]  
Now, \[z_n=n\log(z)-\frac{1}{3}(1-\alpha^2)n^3(\log(z))^3+\frac{1}{15}(2(1-\alpha^2)^2-6(1-\alpha)(1-\alpha^2)(\alpha^2+\alpha))n^5\log(z)^5+\dots,\] and it will suffice to work $\pmod{3^4}$. One computes that \[z\equiv3(5\alpha^2+20\alpha+9)\pmod{3^4}, \,\text{and}\; z_n\equiv(15\alpha^2+60\alpha+18)n+72n^3\pmod{3^4}.\] It follows that 
\begin{align*}
\frac{1}{x_n}&\equiv(72\alpha^2+63)n^2+(54\alpha^2+54\alpha+27)n^4\\
&\equiv(63n^2+27n^4)+(54n^4)\alpha +(72n^2+54n^4)\alpha^2\pmod{3^4}.
\end{align*}
Hence, $\frac{1}{x_n}\in\mathbb{Q}$ implies $\phi_2=O(n^2)\in \mathbb{Z}_3[[n]]$, and $\phi_2\equiv72n^2+54n^4\bmod{3^4}$. Note that $\phi_2$ has a double root at $0$, and $|72|_3=3^{-2}$, which is strictly larger than the $5$-adic norm of all subsequent coefficients. Applying a theorem of Strassmann's, we see that there are no other $n\in\mathbb{Z}_3$ (and so no other $n\in\mathbb{Z})$ satisfying $\phi_2(n)=0$; see \cite{Flynn}. 

For Cases (b) and (c), replace $(x_0,y_0)$ by $(0,0)$ in case (b) and  $(x_0,y_0)$ by $(1,1)$ in case (c) in the addition formula above. As in case (a), work $\pmod{3^4}$ and use the same theorem of Strassmann's to deduce $n=0$. We conclude that if $P=(x,y)$ is such that $P\in \mathcal{E}_i(\mathbb{Q}(\alpha))$ and $x\in\mathbb{Q}$, then $x=0,1$. It follows that $X=\pm{1}$ as claimed.               
 \end{proof}
 \begin{remark} The solutions $X=\pm{1}$ correspond to $D=-2$. Indeed, $-2$ is not in $S^{(3)}$.  
 \end{remark} 

\textbf{(3.)} Suppose $c^2+c$ is a rational square. The conic $c^2+c=x^2$ is rational, with point $(0,0)$. It follows that such $c$ are parametrized by $c=\frac{1}{a^2-1}$, for $a\in\mathbb{Q}$. Then by sending $(t,y)\rightarrow (1/t,y/t^2)$, we obtain rational points on $v^2=-w(w^3+w^2+2w+1)$. We make the change of variables, $a^2-1=\frac{1}{c}=w$, so that $(a,v)$ is a rational point on \[\mathcal{C'}:v^2=-(a-1)(a+1)(a^6-2a^4+3a^2-1),\] a curve of genus three. We study unramified covers of $\mathcal{C'}$ to find all rational points.
\begin{lemma} $\mathcal{C'}=\{\infty^{\pm}, (\pm{1},0)\}$
\end{lemma} 
\begin{proof}
 Let \[\mathcal{D}:u^2=-(a^2-1),\quad s^2=a^6-2a^4+3a^2-1,\] then $\mathcal{D}$ is an unramified $\mathbb{Z}/2\mathbb{Z}$-covering of $\mathcal{C'}$, with map $\pi:\mathcal{D}\rightarrow\mathcal{C'}$ given by $(a,u,s)\rightarrow(a,us)$. The twists of $\mathcal{D}$ are \[\mathcal{D}_d: du^2=-(a^2-1),\quad ds^2=a^6-2a^4+3a^2-1,\quad d\in\mathbb{Q}^*/(\mathbb{Q}^*)^2,\] and every rational point on $\mathcal{C'}$ lifts to a rational point on one of these twists \cite{Stoll-R}. If $p$ is a prime divisor of $d$, then $\mathcal{D}_d$ has no $p$-adic points (hence no rational points) unless both $-(a^2-1)$ and $a^6-2a^4+3a^2-1$ have common roots modulo $p$ \cite{Stoll-R}. Since the resultant of $-(a^2-1)$ and $a^6-2a^4+3a^2-1$ is $1$, we need only consider $\mathcal{D}$ and its twist $\mathcal{D}_{-1}$. However, we can easily describe both $\mathcal{D}(\mathbb{Q})$ and $\mathcal{D}_{-1}(\mathbb{Q})$: \[\mathcal{D}(\mathbb{Q})=\{(\pm{1},0,\pm{1})\} ,\quad \mathcal{D}_{-1}(\mathbb{Q})=\emptyset.\] To see this, note that the equation $-s^2=a^6-2a^4+3a^2-1$ in the definition of $\mathcal{D}_{-1}$ covers the elliptic curve $E: b^2=z^3+2z^2+3z+1$, and a $2$-descent shows that $E(\mathbb{Q})=\{(0,\pm1),\infty\}$. This forces the equality $-u^2=-(0^2-1)$, which has no rational solutions. Similarly for $\mathcal{D}$, the equation $s^2=a^6-2a^4+3a^2-1$ covers the elliptic curve $E': b^2=z^3-2z^2+3z-1$, and a $2$-descent yields $E'(\mathbb{Q})=\{(1,\pm{1}),\infty\}$. Putting these statements together we deduce that $\mathcal{C'}=\{\infty^{\pm}, (\pm{1},0)\}$ as claimed. 
 \end{proof}
 Note that $a=\pm{1}$ implies $c$ is infinite, and so we add no rational $(c,y)$ on $E_1$ with $c\notin S^{(3)}$ in this case.   

We now summarize the computations arising in the $E_2$ cases below: 

\textbf{(1)}. For the points $P=(c,y)\in E_2(\mathbb{Q})$ with either $-c$, $c^2+c$, or $-(c+1)$ a rational square, we use a similar approach. If $-(c+1)=x^2$, then $x$ satisfies $v^2=x^6+x^4-1$ for a rational $v$. Notice that in case two for $E_1$ above, we have already determined the rational points on this curve: $(\pm{1},\pm{1})$, corresponding to $c=-2$ as before.

\textbf{(2)}. Similarly, if $P\in E_2(\mathbb{Q})$ and $c^2+c=x^2$, then $c$ is parametrized by $\frac{1}{a^2-1}=c$. One makes a change of variables to find that $a$ lies on the curve \[\mathcal{A}:v^2=a^6-2a^4+3a^2-1,\] for a rational $v$. The hyperelliptic curve $\mathcal{A}$ covers the elliptic curve $F:c^2=b^3-2b^2+3b-1$, via $(a,v)\rightarrow(a^2,v)$. A standard two descent and torsion algorithm show that $\rank(F(\mathbb{Q}))=0$, and $F(\mathbb{Q})=\{(1,\pm{1}),\infty\}.$ It follows that \[\mathcal{A}(\mathbb{Q})=\{(\pm{1},\pm{1}),\infty^{\pm}\},\] yet again yielding no finite $c$'s.  

\textbf{(3)}. The last case, $-c=x^2$, does require more care. In this case, one finds that $x$ lies on the curve: \[\mathcal{B}: y^2=(1-x^2)(-x^6+2x^4-x^2+1),\] a curve of genus three. We again use unramified covers of $\mathcal{B}$ to reduce the problem. 
\begin{lemma} $\mathcal{B}(\mathbb{Q})=\{(0,\pm{1}), (\pm{1},0), \infty^{\pm}\}$. 
\end{lemma}
\begin{proof}
The relevant covers are \[\mathcal{D}:u^2=1-x^2,\quad s^2=-x^6+2x^4-x^2+1,\] and its twist \[\mathcal{D}_d: du^2=1-x^2,\quad ds^2=s^2=-x^6+2x^4-x^2+1,\quad d\in\mathbb{Q}^*/(\mathbb{Q}^*)^2,\]  (see case 3 above). Again the resultant of $1-x^2$ and $-x^6+2x^4-x^2+1$ is $1$, and so we need only compute $\mathcal{D}(\mathbb{Q})$ and $\mathcal{D}_{-1}(\mathbb{Q})$ as before. We will show that $\mathcal{D}_{-1}(\mathbb{Q})=\emptyset$. To see this, note that the second defining equation of $\mathcal{D}_{-1}$ is $s^2=x^6-2x^4+x^2-1$, which covers the elliptic curve $c^2=b^3+b^2+2b+1$, having only $b=0$ as a possible solution (2-descent). This leaves only the infinite points on $\mathcal{D}_{-1}$, which we disregard. 

It therefore suffices to find all rational points on $\mathcal{D}$ to recover the points on $\mathcal{B}$ \cite{Stoll-R}. To do this we find all rational points on \[\mathcal{U}: s^2=-x^6+2x^4-x^2+1.\] Unfortunately, the rational points of the Jacobian of $\mathcal{U}$ have rank two, and so we must apply the elliptic Chabauty method to $\mathcal{U}$, as in case $2$ of $E_1$, to describe the rational points on $\mathcal{B}$; see \cite{Flynn}. It follows that only $c=0$ is added to the complement of $S^{(3)}$ in this case.          
\end{proof}

In summary, we have shown that for all but $c\in\{-2, 0\}$, if $(c,y)\in V_3(\mathbb{Q})$ then $\Gal(f_c^2)\cong\Aut(T_2)$. It remains to show $\Gal(f_c^3)\ncong\Aut(T_3)$, to ensure that $c$ has a small third iterate. This is easy. Since $f_c^3(0)\in (K_{2,c})^2$, it suffices to show that $f_c^3$ is irreducible. But it is, as $\{-c,f_c^2(0), f_c^3(0)\}$ are all not rational squares (Theorem $2.2$ in \cite{Jones1} and Lemma 2.2 \cite{Jones2}). This completes the proof of the theorem.  
\end{proof}
As an immediate application of the correspondence described in our theorem, we determine completely the integers having small third iterate, answering a question of Rafe Jones. This is made possible by David's theorem on bounds in elliptic logarithms. 
\begin{corollary} $S^{(3)}\cap\mathbb{Z}=\{3\}.$ That is, $3$ is the only integer with small third iterate. 
\begin{proof}
 Applying the theorem above, it suffices to list the points with integral $t$-coordinates on $E_1(\mathbb{Q})$ and $E_2(\mathbb{Q})$. We will show that the only such $c\in\mathbb{Z}$ are $(-2,\pm{1})\in E_1$ and $(3,7/2), (0,\pm{1}), (-2,\pm{1})$ on $E_2$. As $c=0,-2$ are not in $S^{(3)}$ ($-2$ has a small second iterate), we will have proved $S^{(3)}\cap\mathbb{Z}=\{3\}$ as claimed. 

Suppose $(c,y)\in V_3(\mathbb{Q})$ and $c$ is an integer. We consider the case $(c,y)\in E_2(\mathbb{Q})$ here. Then, via the rational map $(t,y)\rightarrow (t,y(t+1))$, we have a point on the elliptic curve $y^2=(x+1)(x^3+2x^2+x+1)$ that has both integral coordinates. This curve has a rational point $(0,1)$, so we may apply the birational map \[(x,y)\rightarrow (2(x^2-y)+3x, x(4x^2+6x-4y+3/2)\] to obtain a rational point on the Weierstrass equation $y^2+3xy+4y=x^3+3/4x^2-4x-3$, having integral $x$-coordinate (see chapter $7$ of \cite{Coh} for methods of transforming various curves into standard form). Finally, by sending $(x,y)\rightarrow(x+1,y+3/2x+2)$, we get an integral point on the minimal Weierstrass equation $E: y^2=x^3-x+1$. We now use David's theorem on elliptic logarithms and the LLL algorithm to list all integral points on $E$; see \cite[\S8.8]{Coh}. However, there are other ways of finding the integral points on genus 1 equations, not necessarily in standard form; see \cite{Stroeker}.

A standard $2$-descent shows $E(\mathbb{Q})$ has rank one and no torsion. Using Silverman's bounds on the difference between the logarithmic and canonical heights \cite{SilvA}, we see that $P=(1,1)$ is a generator for $E(\mathbb{Q})$.  Furthermore $\disc(E)=-2^4\cdot23<0$, so that $E(\mathbb{R})$ is connected, and we need not worry about certain subtleties that arise when this is not the case. We keep track of various constants, in keeping with the notation used in \cite[\S8.8]{Coh} : \[h(E)=8.841, \mu(E)=2.9356, c_1=160.07, c_2=0.099617, c_3=8, c_5=35.785, c_7=.555,\] \[c_8=24.032, c_9=3.962,  \omega_1=4.767, \psi(P)=3.676, h_m(P)=8.841, n=2, c_{10}=4.074\cdot10^{40}.\] We deduce from David's Theorem and \cite[Corollary 8.73]{Coh}, that if $Q=N*P$ and $Q\in E(\mathbb{Z})$ then, $$-\log(\psi(Q))\geq(0.049805)N^2-3.578,$$ $$-\log(\psi(Q))\leq 4.074\cdot10^{40}\cdot(\log(N)+1.377)\cdot(\log(\log(N))+8.841+1.377)^2.$$
Such a system of inequalities is violated when $N>10^{25}$, hence $N\leq10^{25}$. At first glance, such an astronomical bound seems useless. However, the mere existence of a bound allows us to employ reduction techniques via the LLL-algorithm. In practice we can reduce $N$ substantially ($N<100$ for many curves with moderately sized coefficients). We input $C>10^{50}$, say $C=10^{60}$ to Proposition $2.3.20$ in \cite{Coh} with matrix $X$, and it's LLL-reduced matrix $Y$: 
\begin{align*}
  X&= \left( \begin{array}{cc}
      1 & 0 \\
      \lfloor C\psi(P)\rceil & \lfloor C\omega\rceil \\
    \end{array} \right),  \\
  Y&= \left( \begin{array}{cc}
    -928309378069697515001621255593 & -346795556312856677461017188696\\
    0 & -5070602400912917605986812821504\\ \end{array} \right) 
\end{align*} 
where, $\lfloor x \rceil$ denotes the nearest integer to a real number $x$. This can be done using MAGMA, \cite{Magma}. Then for $m\leq 10^{60}$, $$|m\omega+N\psi(P)|\geq\frac{\sqrt{8.4\times10^{59}-2\times10^{50}}-(\frac{1}{2}+10^{25})}{10^{60}},$$ and hence $$9\times10^{-31}\leq|m\omega+N\psi(P)|\leq35.785e^{-.049805N^2}.$$
These inequalities are incompatible for $N>40$, so we can conclude that $N\leq40$. We could run our reduction techniques again to further reduce $N$, however this is not necessary. Searching the forty points $\{N*P\}_{N\leq40}$ in Sage \cite{Sage}, we deduce that \[ \{(t,y)\in E(\mathbb{Q}):t\in\mathbb{Z}\}=\{(0,\pm{1}),(\pm{1},\pm{1}), (3,\pm{5}), (5,\pm{11}), (56,\pm{419})\},\]
and by retracing these point to $E_2$, conclude that \[\{t\in\mathbb{Z}: (t,y)\in E_2(\mathbb{Q})\}=\{-2,3\}.\]  
However, $|\Gal(f_{-2}^2)|=4$, so that $-2$ is not in $S^{(3)}$. 

Similarly, $E_1(\mathbb{R})$ is connected, $|E_1(\mathbb{Q})_{Tor}|=1$, and $\rank(E_1(\mathbb{Q}))=1$ with generator $P=(-2,1)$, and we can compute $E_1(\mathbb{Z})=\{(-2,1)\}$. The corollary follows.

It is worth noting that if one finds generators for the Mordell-Weil group, Sage has implemented a package which computes the integer points of a Weierstrass equation \cite{Sage}. Since our equations have small coefficients and rank one, it is easy to find a generator, and run this package. All results were checked in this way.  
\end{proof}
\end{corollary}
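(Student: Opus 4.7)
The plan is to combine Theorem $3$ with an effective computation of integer points on the two elliptic curves $E_1$ and $E_2$. By Theorem $3$, any $c\in S^{(3)}\cap\mathbb{Z}$ must appear as the $t$-coordinate of an integer point on $E_1(\mathbb{Q})$ or $E_2(\mathbb{Q})$, with $c\ne 0,-2$ (since $0\notin S^{(3)}$ and $-2$ already has a small \emph{second} iterate, not a small third iterate). So the whole problem reduces to listing the integer $t$-coordinates on these two affine elliptic curves.

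For each of $E_1$ and $E_2$, I would first put the curve into a minimal Weierstrass form by an explicit birational transformation defined over $\mathbb{Q}$ (using, e.g., Connell's or Cohen's standard algorithms), tracking carefully how the condition ``$t\in\mathbb{Z}$'' pulls back to an integrality condition on the new model. A routine $2$-descent then shows that each curve has rank one and small (or trivial) torsion, and Silverman's bounds on the difference between the canonical and Weil heights, combined with a short point search, pin down an explicit Mordell-Weil generator $P$.

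Once a generator is in hand, the main computational step is to bound $|N|$ for an integer point of the form $Q=NP$. Here I would invoke David's effective lower bound on linear forms in elliptic logarithms (in the form presented in Cohen, \S8.8), which yields a double inequality: the elliptic logarithm $\psi(Q)$ must be exponentially small in $N^2$ (from the integrality of $Q$), yet cannot be smaller than an explicit function of $\log N$ (from David's theorem). This forces an astronomical but finite bound $N\le 10^{25}$ or so. The hard part of the argument is then the \emph{reduction} of this bound: one feeds a well-chosen constant $C\approx 10^{60}$ and the lattice matrix $X=\bigl(\begin{smallmatrix}1 & 0\\ \lfloor C\psi(P)\rceil & \lfloor C\omega\rceil\end{smallmatrix}\bigr)$ into the LLL algorithm, uses the resulting lower bound on the shortest vector of the reduced lattice to produce a much stronger lower bound on $|m\omega+N\psi(P)|$, and combines this with David's upper bound to cut $N$ down to $N\le 40$ or so. A direct search over the remaining multiples of $P$ finishes the integer-point enumeration.

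The main obstacle is really the LLL reduction step: one must choose $C$ large enough (roughly squared relative to the David bound) for the method to succeed, and carry out the arithmetic with enough precision that the rounded lattice faithfully represents the elliptic logarithms. Once the enumeration is complete, pulling the integer points back through the birational transformations of step one gives the integer $t$-coordinates on $E_1$ and $E_2$; discarding $t=0,-2$ leaves only $t=3$, and since $f_3$ is already known to have $|\Gal(f_3^3)|=64<\Aut(T_3)$ while $\Gal(f_3^2)\cong D_4\cong\Aut(T_2)$, the value $c=3$ indeed lies in $S^{(3)}$. This proves $S^{(3)}\cap\mathbb{Z}=\{3\}$. As a sanity check, the whole computation can be cross-verified using the integer-points routines of Sage or Magma applied directly to the minimal Weierstrass models.
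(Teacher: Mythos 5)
Your proposal is correct and follows essentially the same route as the paper: reduce via Theorem~$3$ to integer $t$-coordinates on $E_1$ and $E_2$, transform to minimal Weierstrass models while tracking integrality, use $2$-descent to establish rank one with an explicit generator, invoke David's effective bound on linear forms in elliptic logarithms to get an initial (astronomical) bound on the multiple $N$, reduce that bound with LLL, and finish with a direct search and pullback. The paper carries out the explicit arithmetic (generator $P=(1,1)$ on $y^2=x^3-x+1$, the concrete constants from Cohen \S8.8, the $C=10^{60}$ lattice, reduction to $N\le 40$) that you describe at a high level, but the strategy and all the key ingredients match.
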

\end{section} 

\begin{section}{Elliptic Surface Parametrized by $\gamma$ and Higher Iterates} 
In order to study the Galois theory of third iterates for general quadratic polynomials, we view $\gamma$ as an indeterminate and begin our study with the surface $C_{3,\gamma}: y^2=f_{\gamma,c}^3(\gamma)$. Fortunately, for every $\gamma$ we have the rational point $(0,f_{\gamma,0}(0))$, hence an elliptic surface parametrized by $\gamma$. For $\gamma\neq0,1$ we see that $C_{3,\gamma}$ is birational to 
\[\mathcal{E}_{\gamma}: y^2 = x^3+a_2x^2+a_4x+a_6\]
with 
\begin{align*}
a_2 &= \frac{144}{13}\gamma^2 - \frac{147}{13}\gamma + \frac{67}{52},\\
a_4 &=  \frac{6912}{169}\gamma^4 - \frac{14112}{169}\gamma^3 + \frac{8811}{169}\gamma^2 - \frac{4635}{338}\gamma + \frac{6003}{2704},\\
a_6 &= \frac{110592}{2197}\gamma^6 - \frac{338688}{2197}\gamma^5 + \frac{384336}{2197}\gamma^4 - \frac{228889}{2197}\gamma^3 + \frac{365399}{8788}\gamma^2 - \frac{307667}{35152}\gamma + \frac{169073}{140608}.\\
 \end{align*}
The surface $\mathcal{E}\rightarrow\mathbb{P}^1$ has a section $(-2\gamma^2+2\gamma, 0)$ of infinite order. Moreover, the Weierstrass equation for the generic fiber $\mathcal{E}_\gamma$ is minimal over $\mathbb{Q}[\gamma]$, and so we may run Tate's algorithm to compute the local information at the bad fibers of $\mathcal{E}$, summarized in the following table \cite{SilvA}:
\begin{center}
\begin{tabular*}{0.75\textwidth}{@{\extracolsep{\fill}} | c | c | c | r | }
    \hline
 \quad\quad{$\mathcal{P}$, Place in $\mathbb{Q}(\gamma)$}& $v_{\mathcal{P}}(\Delta)$& Tamagawa \# & Kodaira Symbol \\ \hline
    $\frac{1}{\gamma}$ & $9$ & $2$ & \Rmnum{3}$^*$ \quad\quad\quad\\ \hline
   \quad$\gamma^3 - \frac{23}{16}\gamma^2 + \frac{13}{32}\gamma - \frac{23}{256}$ & $1$ & $1$ & \Rmnum{1}$_1$ \quad\quad\quad\\ \hline \end{tabular*}
\end{center}     
 However, $\mathcal{E}$ is a rational surface since $\deg(a_i)\leq i$ and the minimal projective model of $\mathcal{E}$ has a singular fiber \cite{SS}. It follows from the Shioda-Tate formula that $\rank(\mathcal{E}(\bar{\mathbb{Q}}(\gamma))=1$, and hence $\rank(\mathcal{E}({\mathbb{Q}}(\gamma))=1$, \cite{OS}. Then Silverman's specialization theorem \cite{SilvA} implies $\rank(\mathcal{E}_\gamma(\mathbb{Q}))\geq 1$ for all but finitely many $\gamma\in\mathbb{Q}$. We can thus conclude the following theorem:  

\begin{theorem} $S_\gamma^{(3)}$ is infinite for all but finitely many $\gamma\in\mathbb{Q}$. 
\end{theorem}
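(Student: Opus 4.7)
The plan is to combine the positive-rank statement just established for the generic fiber with a Hilbert-irreducibility-style argument. Fix $\gamma \in \mathbb{Q}$ outside the finite exceptional set where Silverman's specialization theorem fails, so that $\mathcal{E}_\gamma(\mathbb{Q})$ has rank at least one and is infinite. Since the birational equivalence $\mathcal{E}_\gamma \dashrightarrow C_{3,\gamma}$ is defined over $\mathbb{Q}$ and the projection $(c,y) \mapsto c$ on $C_{3,\gamma}$ has degree two, this produces an infinite set $\Sigma_\gamma \subset \mathbb{Q}$ of rational values $c$ with $f^3_{\gamma,c}(\gamma) \in \mathbb{Q}^2$.

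For any such $c$, the square root of $f^3_{\gamma,c}(\gamma)$ already lies in $\mathbb{Q} \subset K_{2,c}$; by Remark 1.2 this trivializes one of the putative quadratic generators of $K_{3,c}/K_{2,c}$, so whenever $\Gal(f^2_{\gamma,c}) \cong \Aut(T_2)$ one automatically has $\Gal(f^3_{\gamma,c}) \not\cong \Aut(T_3)$, and hence $c \in S_\gamma^{(3)}$. It therefore suffices to show that level-two maximality holds for infinitely many $c \in \Sigma_\gamma$.

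Failure of level-two maximality occurs exactly when one of $-c$, $(c-\gamma)^2 + c$, or the product $-c \cdot ((c-\gamma)^2 + c)$ is a rational square, each cutting out a thin subset of the $c$-line. Pulling these square-root conditions back along $\mathcal{E}_\gamma \to \mathbb{A}^1_c$ yields double covers of $\mathcal{E}_\gamma$; for all but finitely many $\gamma$ a direct ramification calculation shows each cover has geometric genus at least two, and Faltings' theorem then bounds its rational points. Subtracting these finitely many exceptional points from $\mathcal{E}_\gamma(\mathbb{Q})$ leaves infinitely many $c \in S_\gamma^{(3)}$.

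The main obstacle is controlling the three genus calculations uniformly in $\gamma$, since a cover could become degenerate on a thin $\gamma$-locus that would have to be absorbed into the finite exceptional set. A cleaner alternative is to cite a theorem of N\'eron: the image of $E(\mathbb{Q})$ in $\mathbb{P}^1$ under any non-constant $\mathbb{Q}$-morphism is non-thin whenever $\rank(E(\mathbb{Q})) \geq 1$. Combined with Hilbert's irreducibility theorem applied to the generic Galois extension at level two over $\mathbb{Q}(c)$, this directly yields infinitely many $c \in \Sigma_\gamma$ with $\Gal(f^2_{\gamma,c}) \cong \Aut(T_2)$, completing the proof.
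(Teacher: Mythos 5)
Your primary argument coincides with the paper's proof: Silverman specialization gives rank one for all but finitely many $\gamma$, hence infinitely many $c$ with $f^3_{\gamma,c}(\gamma)\in\mathbb{Q}^2$, and then one shows that among these only finitely many have $\Gal(f^2_{\gamma,c})\ncong\Aut(T_2)$, since each of the three obstructing square conditions on $-c$, $(c-\gamma)^2+c$, or their product pulls back along $\mathcal{E}_\gamma\to\mathbb{A}^1_c$ to a curve of genus at least two and Faltings finishes. The uniformity concern you raise is real but mild: genus degeneration of these covers is cut out by a polynomial (discriminant) condition in $\gamma$, so the locus where the argument fails is either finite or all of $\mathbb{A}^1_\gamma$, and a single specialization (say $\gamma=0$, treated in Section~3) shows it is not all of $\mathbb{A}^1_\gamma$; one then absorbs the finitely many bad $\gamma$ into the exceptional set already present from specialization. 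You should also make explicit, as the paper does, the dichotomy on $f^3_{\gamma,c}$ being reducible (in which case $c\in S_\gamma^{(3)}$ trivially) versus irreducible (in which case Lemma~3.2 of Jones applies), though both branches lead to the same conclusion.

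Your proposed ``cleaner alternative'' does not work as stated. The claim that the image of $E(\mathbb{Q})$ under a non-constant morphism $\phi\colon E\to\mathbb{P}^1$ is non-thin whenever $E$ has positive rank is false: since $\deg\phi\geq 2$, the set $\phi(E(\mathbb{Q}))$ is covered by the single non-invertible map $\phi$ from the curve $E$, so it is a type~II thin set by definition. The theorem of N\'eron you likely have in mind concerns non-thinness of $E(\mathbb{Q})$ \emph{inside} $E$ (or preservation of generic rank under specialization outside a thin set of parameters), not non-thinness of a projected image in $\mathbb{P}^1$. Moreover, Hilbert irreducibility only tells you the bad $c$'s form a thin subset of $\mathbb{Q}$, and the intersection of two thin sets (your $\Sigma_\gamma$ and the Hilbert-bad set) carries no a priori finiteness constraint. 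So this route does not bypass the genus computation; you need the Faltings argument on the pulled-back covers, which is exactly what your first paragraph (and the paper) does.
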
 
\begin{proof} As remarked above, for all but finitely many $\gamma\in\mathbb{Q}$, we have that $\rank(\mathcal{E}_\gamma(\mathbb{Q}))\geq 1$. For such $\gamma$, the set $\{c\in\mathbb{Q} | f_{\gamma,c}(\gamma)\in\mathbb{Q}^2\}$ is infinite. However, the subset of $c$'s such that either $-f_{\gamma,c}(\gamma), f_{\gamma,c}^2(\gamma)$, or $-f_{\gamma,c}(\gamma)\cdot f_{\gamma,c}^2(\gamma)$ is a rational square is finite (each case gives a  rational point on a genus two or higher curve). In any event, for all but finitely many $c$'s, $\Aut(T_2)\cong \Gal(f_{\gamma,c}^2)$. If $f_{\gamma,c}^3$ is reducible, then certainly $c\in S_{\gamma}^{(3)}$. On the other hand, if $f_{\gamma,c}^3$ is irreducible, then $\Aut(T_3)\ncong \Gal(f_{\gamma,c}^3)$, since $f_{\gamma,c}^3(\gamma)\in(K_{2,c})^2$. The result follows from Lemma 3.2 in \cite{Jones2}.            
 \end{proof}
 
  There is however, noticeable sensitivity on $\gamma$, as evidenced by the following proposition (compare to the $\gamma=0$ case).     
\begin{proposition} There is an inclusion \[S_{1}^{(3)}\subset \{t\in\mathbb{Q}|\, (t,y)\;\;\text{satisfies}\, \;E: y^2=t^4-2t^3+t^2+t\;\; \text{for some}\; y\in\mathbb{Q}\},\] and E is a rank one elliptic curve. Moreover, the complement of $S_{1}^{(3)}$ is finite, and $S_1^{(3)}\cap\mathbb{Z}=\emptyset$.  
\end{proposition}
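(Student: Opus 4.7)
The plan is to run exactly the $\gamma=0$ analysis of Theorem~$3$, now with $\gamma=1$. The first step is the computation of the critical orbit: $f_{1,c}(1)=c$, $f_{1,c}^2(1)=c^2-c+1$, and $f_{1,c}^3(1)=c^4-2c^3+c^2+c$, which identifies $E$ with the curve $y^2=f_t^3(1)$. Running the recursion for $\mathcal{G}_2$ yields the three polynomials $-t,\; t^2-t+1,\; -t(t^2-t+1)$, so when $\Gal(f_c^2)\cong D_4$ the three quadratic subfields of $K_{2,c}$ are $\mathbb{Q}(\sqrt{-c})$, $\mathbb{Q}(\sqrt{c^2-c+1})$, and $\mathbb{Q}(\sqrt{-c(c^2-c+1)})$. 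For $c\in S_{1}^{(3)}$, Lemma $2.2$ of \cite{Jones2} then forces $f_{c}^3(1)\cdot d\in(\mathbb{Q}^*)^2$ for exactly one $d\in\{1,-c,\,c^2-c+1,\,-c(c^2-c+1)\}$.

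To establish the inclusion $S_1^{(3)}\subset\pi(E(\mathbb{Q}))$, I have to eliminate the three nontrivial square classes. Each produces an auxiliary curve: $d=-c$ yields the elliptic curve $y^2=-(t^3-2t^2+t+1)$, while $d=c^2-c+1$ and $d=-c(c^2-c+1)$ yield curves of shape $y^2=t(t^2-t+1)(t^3-2t^2+t+1)$ and $y^2=-(t^2-t+1)(t^3-2t^2+t+1)$ respectively, both of genus two. For each curve I would compute the relevant Mordell--Weil rank, either by a standard $2$-descent plus a short generator search, or, for the Jacobians of the two genus-two curves, by Stoll's $2$-descent machinery. With rank and torsion in hand, one locates all rational points by Chabauty--Coleman when the rank is strictly less than the genus (or by the elliptic Chabauty method of Flynn--Wetherell, as used for $\mathcal{C}$ in Theorem $3$, if needed), and verifies that every such rational $c$ forces one of $-c$, $c^2-c+1$, or $-c(c^2-c+1)$ to lie in $(\mathbb{Q}^*)^2$, which drops $\Gal(f_c^2)$ below $D_4$ and hence excludes $c$ from $S_1^{(3)}$.

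For the rank-one statement on $E$ itself, I would translate the obvious rational point $(1,1)$ to the origin to produce a minimal Weierstrass model, then run a routine $2$-descent to bound the rank by $1$, and confirm equality by exhibiting a non-torsion generator (for instance, the image of $(1,1)$). The claim that the complement of $S_1^{(3)}$ in $\pi(E(\mathbb{Q}))$ is finite then follows by the same mechanism as in Theorem~$3$: a point $c\in\pi(E(\mathbb{Q}))$ fails to belong to $S_1^{(3)}$ only if one of $-c$, $c^2-c+1$, or $-c(c^2-c+1)$ is a rational square, and each of these conditions, intersected with $E$, defines a curve of genus at least two, to which Faltings' theorem applies.

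Finally, for $S_1^{(3)}\cap\mathbb{Z}=\emptyset$, I would adapt the argument of Corollary~$1$: with a generator of $E(\mathbb{Q})$ in hand, use David's bound on elliptic logarithms together with LLL-reduction to obtain an explicit bound on the multiplier $N$ for which $NP$ is integral, enumerate the resulting short list of integer points, and verify directly that none of them corresponds to a polynomial with small third iterate (typically because the condition $\Gal(f_{1,c}^2)\cong D_4$ already fails). The main obstacle I anticipate is the case analysis on the genus-two twist curves; should any one of the associated Jacobians have Mordell--Weil rank equal to its dimension, straight Chabauty--Coleman is unavailable and one would need to fall back on elliptic Chabauty over the cubic field cut out by $t^3-2t^2+t+1$, which is the most delicate ingredient in this program.
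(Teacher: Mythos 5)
Your proposal matches the paper's proof almost step for step. You compute the correct adjusted critical orbit for $\gamma=1$, identify the same three twist curves corresponding to $\mathcal{G}_2 = \{-t,\;t^2-t+1,\;-t(t^2-t+1)\}$ (your factored forms $y^2=-(t^3-2t^2+t+1)$, $y^2=t(t^2-t+1)(t^3-2t^2+t+1)$, $y^2=-(t^2-t+1)(t^3-2t^2+t+1)$ are exactly the paper's $C_2$, $C_1$, $C_3$), and propose the same toolbox: Stoll's $2$-descent on the genus-two Jacobians followed by Chabauty--Coleman (which suffices here, since the paper finds both Jacobians to have rank one), a $2$-descent on the quartic model of $E$, and David's bound plus LLL for the integral points. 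The only genuine divergence is cosmetic: the paper passes to a Weierstrass model via the standard quartic-to-cubic map and invokes Sage's built-in integral-point routine rather than carrying out David/LLL by hand, and it excludes the exceptional $c\in\{-1,0,1\}$ by noting they give reducible $f_c$ or $|\Gal(f_c^2)|=4$, which is the same observation you make (one of $-c$, $c^2-c+1$ already a square). Your remark that the elliptic Chabauty fallback might be needed turns out not to be necessary, since the genus-two ranks are both strictly less than two.
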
 
\begin{proof} We work with $f_c(x) = (x-1)^2+c$. After normalization, the relevant curves are: 
\begin{align*}
{E: y^2=t^4-2t^3+t^2+t,\quad C_1: y^2= t^6 - 3t^5 + 4t^4 - 2t^3 + t},\\ 
{C_2: y^2=-t^3+2t^2-t-1, \quad C_3: y^2=-t^5+3t^4-4t^3+2t^2-1,}
\end{align*}
corresponding to $f_c^3(1)$ and the three quadratic subfields $\mathbb{Q}(\sqrt{(c-1)^2+c})$, $\mathbb{Q}(\sqrt{-c})$ and $\mathbb{Q}(\sqrt{ -c\cdot((c-1)^2+c)})$ of $K_{2,c}$ respectively.

The elliptic curve $C_2$ has rank zero with no rational points. The two genus two curves, $C_1$ and $C_3$, have Jacobians of rank one. Running the Chabauty function in Magma, we obtain \[C_1(\mathbb{Q})=\{(-1,\pm{3}),(0,0), (1,\pm{1}), \infty^{\pm}\},\quad C_3(\mathbb{Q})=\{(-1,\pm{3}),\infty\}.\]      
However $c=1$ is not in $S_1^{(3)}$, since its second iterate has Galois group of size four. Similarly, $c=-1,0$ are not in $S_1^{(3)}$, as they correspond to reducible polynomials. We conclude that $E$ is the only relevant curve, proving the first part of the statement. 

For the integer points, note that $E$ is birational to the elliptic curve $E': y^2-2xy+2y=x^3$, via the transformation $(t,y)\rightarrow (2(t^2-y)-2t, t(4t^2-4t-4y))$, see chapter $7$ of \cite{Coh}. It suffices therefore, to compute the points on $E'$ having integral $x$-coordinate. For this we use Sage and then compute preimages in $E$, finding only $t=1$. Because $c=1$ is not in $S_1^{(3)},$ the result follows.   
\end{proof}   
       
\begin{remark}As for larger iterates in the $\gamma=0$ case, we note that the curve $C_4: y^2=f_c^4(0)$ is of genus $3$, and its Jacobian, $J_4$, has rank zero! It is straightforward, therefore, to conclude that $C_4(\mathbb{Q})=\{\infty^{\pm}, (0,0),(-1,0)\}$. With this description, it may be possible to use methods as in the third iterate case.
\end{remark} 

 \textbf{Acknowledgments} It is a pleasure to thank my advisor Joe Silverman, as well as Rafe Jones, for their many helpful comments and valuable insight. I would also like to thank Bjorn Poonen for suggesting the use of unramified covers to determine the rational points on the higher genus curves within this paper.  
\end{section} 


\begin{thebibliography}{15}
\bibitem{Magma} W. Bosma, J. Cannon and C. Playoust, \emph{The Magma algebra system  1: The user language}, J. Symbolic Comput. \textbf{24}, 235-265 (1997). (Also see Magma homepage at {\tt http://magma.maths.usyd.edu.au/magma/handbook/text/1398}
\bibitem{B-J} N.Boston and R.Jones, \emph{The Image of an Arboreal Galois Representation.} Pure and Applied Mathematics Quarterly 5(1) (Special Issue: in honor of Jean-Pierre Serre, Part 2 of 2) (2009), 213-225. 
\bibitem{Coh} Cohen, Henri \emph{Number Theory Volume 1: Tools and Diophantine Equations.} Graduate Texts in Mathematics. Springer-Verlag. 2007
\bibitem{KC}K. Conrad, \emph{Galois Groups of Cubics and Quartics, all Characteristics.} Expository, {\tt http://www.math.uconn.edu/kconrad/blurbs/} 
\bibitem{Falt} G. Faltings, \emph{Endlichkeitss\"atze f\"ur abelsche Variet\"aten \"uber Zahlk\"orpern.} Invent. Math. 73 (1983), 349-366.
\bibitem{Flynn} E. Victor Flynn, Joseph L. Wetherell, \emph{Finding Rational Points on Bielliptic Genus 2 Curves.} Manuscripta Mathematica (1999) Volume: 100, Issue: 4, Pages: 519-533. 
\bibitem{Twists} A. Granville, \emph{Rational and Integral Points on Quadratic Twists of a Given Hyperelliptic Curve.} International Mathematics Research notes. \textbf{27.8}. (2007) 
\bibitem{PrimitiveDivisors} C. Gratton, K. Nguyen, T. Tucker, \emph{ABC Implies Primitive Prime Divisors in Arithmetic Dynamics},  arXiv:1208.2989v1. 
\bibitem{Hart} R. Hartshorne. \emph{Algebraic Geometry.} Springer-Verlag, New York, 1977. Graduate Texts in Mathematics, No. 52. 
\bibitem{Jones1} R-Jones, \emph{An Iterative Construction of Irreducible Polynomials that are Reducible Modulo Every Prime}. J. Algebra, to appear. arXiv:1012.2857v3 
\bibitem{Jones2} R-Jones, \emph{The Density of Prime Divisors in the Arithmetic Dynamics of Quadratic Polynomials.} J. Lond. Math. Soc. (2) \textbf{78} (2) 2008, 523-544.
\bibitem{Poonen}W. McCallum, B. Poonen, \emph{The Method of Chabauty and Coleman.} Expository, http://www-math.mit.edu/~poonen/papers/chabauty.pdf 
\bibitem{OS} Oguiso, K. and Shioda, T., \emph{The Mordell-Weil Lattice of a Rational Elliptic Surface}, Comment. Math. Univ. St. Paul 40 (1991), no.1, 83--99.
\bibitem{Odoni} R. W. K. Odoni. \emph{The Galois theory of Iterates and Composites of Polynomials}. Proc.
London Math. Soc. (3), 51(3):385Ð414, 1985.
\bibitem{Sage} \emph{Sage}: Stein, William, \emph{Sage: {O}pen {S}ource {M}athematical {S}oftware
({V}ersion 2.10.2)}, The Sage~Group, 2008, {\tt http://www.sagemath.org}.
\bibitem{SS} M. Schuett, T. Shioda \emph{Elliptic Surfaces.} Algebraic geometry in East Asia - Seoul 2008, Advanced Studies in Pure Mathematics \textbf{60} (2010), 51-160.  
\bibitem{SilvA} J. Silverman, \emph{Advanced Topics in the Arithmetic of Elliptic Curves}. Graduate Texts in Math. 151, Springer (1994).
\bibitem{Silv}J. Silverman, \emph{Arithmetic of Elliptic Curves.} Graduate Texts in Mathematics. Springer-Verlag, GTM 106, 1986. Expanded 2nd Edition, 2009. 
\bibitem{SilvB} J. Silverman, \emph{The Difference Between the Weil Height and the Canonical Height on Elliptic Curves.} Math. Comp. \textbf{55} (1990), 723-743. 
\bibitem{Smart} N. Smart, \emph{The Algorithmic Resolution of Diophantine Equations.} London Math. Soc. Student Texts 41 (1998). 
\bibitem{Stoll-R} M. Stoll, \emph{Rational Points on Curves.} Journal de Th\'eorie des Nombres de Bordeaux 
23, 257-277 (2011).
\bibitem{Stoll-de} M.  Stoll, \emph{Implementing 2-descent for Jacobians of Hyperelliptic Curves.} Acta Arith. 98, 245-277 (2001). 
\bibitem{Stoll-Galois} M. Stoll, \emph{Galois groups over Q of some Iterated Polynomials.}  Arch. Math. 59, 239-244 (1992).
\bibitem{Stroeker} R. J. Stroeker, N. Tzanakis, \emph{Computing All Integer Solutions of a Genus 1 Equation.} Math. Comp. 72 (2003), 1917-1933 
\bibitem{T-T} T. Thongjunthug, \emph{Computing A Lower Bound for the Canonical Height on Elliptic Curves over Number Fields.}  Math. Comp \textbf{79} (2010), 2431Ð2449. 
\\
\\
\\
Mailing Address: \\ 
Wade Hindes\\
$191$ Williams St, $\#8$\\
Providence, RI $02906$ USA
\end{thebibliography}
\end{document}